\numberwithin{equation}{section}
\newtheorem{theorem}{Theorem}[section]
\newtheorem{lemma}[theorem]{Lemma}
\newtheorem{proposition}[theorem]{Proposition}
\newtheorem{corollary}[theorem]{Corollary}
\newtheorem{thm}{Theorem}
\theoremstyle{definition}
\newtheorem*{remark}{Remark}
\newtheorem*{conj}{Conjecture}
\begin{document}
\title[Zeros of the quadrilateral zeta function on the critical line]{Functional equation and zeros on the critical line of the quadrilateral zeta function}
\author[T.~Nakamura]{Takashi Nakamura}
\address[T.~Nakamura]{Department of Liberal Arts, Faculty of Science and Technology, Tokyo University of Science, 2641 Yamazaki, Noda-shi, Chiba-ken, 278-8510, Japan}
\email{nakamuratakashi@rs.tus.ac.jp}
\urladdr{https://sites.google.com/site/takashinakamurazeta/}
\subjclass[2010]{Primary 11M35, Secondary 11M26}
\keywords{Functional equation, converse theorems, Quadrilateral zeta function, Zeros on the critical line}
\maketitle

\begin{abstract}
For $0 < a \le 1/2$, we define the quadrilateral zeta function $Q(s,a)$ using the Hurwitz and periodic zeta functions and show that $Q(s,a)$ satisfies Riemann's functional equation studied by Hamburger, Heck and Knopp. Moreover, we prove that for any $0 < a \le 1/2$, there exist positive constants $A(a)$ and $T_0(a)$ such that the number of zeros of the quadrilateral zeta function $Q(s,a)$ on the line segment from $1/2$ to $1/2 +iT$ is greater than $A(a) T$ whenever $T \ge T_0(a)$. 
\end{abstract}

 \tableofcontents
 
\section{Introduction and Statement of the Main Results}
\subsection{Main results}
For $0< a \le 1$, define the Hurwitz zeta function $\zeta (s,a)$ by 
\[
\zeta (s,a) := \sum_{n=0}^\infty \frac{1}{(n+a)^s}, \qquad \sigma >1,
\]
and the periodic zeta function ${\rm{Li}} (s,a)$ by 
\[
{\rm{Li}} (s,a) := \sum_{n=1}^\infty \frac{e^{2\pi ina}}{n^s}, \qquad \sigma >1.
\]
The Dirichlet series of $\zeta (s,a)$ and ${\rm{Li}} (s,a)$ converge absolutely in the half-plane $\sigma >1$ and uniformly in each compact subset of this half-plane. Moreover, the Hurwitz zeta function has analytic continuation to ${\mathbb{C}}$ except $s=1$, where there is a simple pole with residue $1$ (e.g.,~\cite[Chapter 12]{Apo}). In contrast, the Dirichlet series of the function ${\rm{Li}} (s,a)$ with $0<a<1$ converges uniformly in each compact subset of the half-plane $\sigma >0$ (e.g.,~\cite[p.~20]{LauGa}). Furthermore, the function ${\rm{Li}} (s,a)$ with $0<a<1$ is analytically continuable to the whole complex plane (e.g., \cite[Chapter 2.2]{LauGa}). We clearly have $\zeta (s,1) = {\rm{Li}} (s,1) = \zeta (s)$, where $\zeta (s)$ is the Riemann zeta function. 

For $ 0 <a \le 1/2$, we define the quadrilateral zeta function $Q(s,a)$ as
\begin{equation}\label{eq:defQSA}
2Q(s,a) := \zeta (s,a) + \zeta (s,1-a) +{\rm{Li}} (s,a) + {\rm{Li}} (s,1-a).
\end{equation}
Based on the facts mentioned above, the function $Q(s,a)$ can be continued analytically to the whole complex plane except $s=1$. The first main theorem is the following functional equation of the quadrilateral zeta function $Q(s,a)$.
\begin{theorem}\label{th:FE}
For $ 0 <a \le 1/2$, it holds that
\begin{equation}\label{eq:zfe1q}
Q(1-s,a) =\frac{2\Gamma (s)}{(2\pi )^s} \cos \Bigl( \frac{\pi s}{2} \Bigr) Q(s,a).
\end{equation}
\end{theorem}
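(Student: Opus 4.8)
The plan is to derive \eqref{eq:zfe1q} from Hurwitz's formula, which writes $\zeta(1-s,a)$ in terms of the periodic zeta function: for $0<a<1$,
\[
\zeta(1-s,a) = \frac{\Gamma(s)}{(2\pi)^s}\Bigl( e^{-\pi i s/2}\,{\rm{Li}}(s,a) + e^{\pi i s/2}\,{\rm{Li}}(s,1-a) \Bigr),
\]
where ${\rm{Li}}(s,1-a) = \sum_{n\ge 1} e^{-2\pi i n a}n^{-s}$. This is first established for $\sigma>1$ and then extends to all of ${\mathbb{C}}$ by analytic continuation, since for $0<a<1$ the functions ${\rm{Li}}(s,a)$, ${\rm{Li}}(s,1-a)$ are entire and $\zeta(1-s,a)$ is holomorphic for $s\ne 0$. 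The hypothesis $0<a\le 1/2$ guarantees that $a$ and $1-a$ both lie in $(0,1)$, so the formula is available for both parameters and, moreover, the only singularity of $Q(s,a)$ in sight is the simple pole of $\zeta(s,a)+\zeta(s,1-a)$ at $s=1$.

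The first step treats the Hurwitz part: applying the formula above with parameter $a$ and with parameter $1-a$ and adding, the symmetric combination $e^{-\pi i s/2}+e^{\pi i s/2}=2\cos(\pi s/2)$ factors out, yielding
\[
\zeta(1-s,a)+\zeta(1-s,1-a) = \frac{2\Gamma(s)}{(2\pi)^s}\cos\Bigl(\frac{\pi s}{2}\Bigr)\bigl({\rm{Li}}(s,a)+{\rm{Li}}(s,1-a)\bigr).
\]
The second step treats the periodic part: replacing $s$ by $1-s$ in Hurwitz's formula, again taking it at $a$ and at $1-a$, and adding, one gets (using $\cos(\pi(1-s)/2)=\sin(\pi s/2)$)
\[
\zeta(s,a)+\zeta(s,1-a) = \frac{2\Gamma(1-s)}{(2\pi)^{1-s}}\sin\Bigl(\frac{\pi s}{2}\Bigr)\bigl({\rm{Li}}(1-s,a)+{\rm{Li}}(1-s,1-a)\bigr).
\]
Solving for the ${\rm{Li}}$-sum and rewriting the constant $(2\pi)^{1-s}\bigl(2\Gamma(1-s)\sin(\pi s/2)\bigr)^{-1}$ by means of $\Gamma(s)\Gamma(1-s)=\pi/\sin(\pi s)$ and $\sin(\pi s)=2\sin(\pi s/2)\cos(\pi s/2)$ transforms it into $\tfrac{2\Gamma(s)}{(2\pi)^s}\cos(\pi s/2)$, so that
\[
{\rm{Li}}(1-s,a)+{\rm{Li}}(1-s,1-a) = \frac{2\Gamma(s)}{(2\pi)^s}\cos\Bigl(\frac{\pi s}{2}\Bigr)\bigl(\zeta(s,a)+\zeta(s,1-a)\bigr).
\]

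Adding the identities of the two steps and dividing by $2$ gives, by the definition \eqref{eq:defQSA}, precisely \eqref{eq:zfe1q}; each intermediate identity holds on a half-plane and then everywhere by analytic continuation, so the same is true of \eqref{eq:zfe1q}. The computation is routine once Hurwitz's formula is at hand; the only point requiring attention is the last prefactor simplification, that is, checking that the gamma- and sine-factors produced by the ``reversed'' Hurwitz formula collapse onto the very same kernel $\tfrac{2\Gamma(s)}{(2\pi)^s}\cos(\pi s/2)$ that appears for the Hurwitz part. This coincidence of kernels is exactly what makes the symmetric four-term sum $Q(s,a)$ --- rather than $\zeta(s,a)+\zeta(s,1-a)$ or ${\rm{Li}}(s,a)+{\rm{Li}}(s,1-a)$ separately --- satisfy Riemann's functional equation.
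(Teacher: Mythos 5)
Your argument is correct and follows essentially the same route as the paper: you split $2Q(s,a)$ into the Hurwitz pair $\zeta(s,a)+\zeta(s,1-a)$ and the periodic pair ${\rm{Li}}(s,a)+{\rm{Li}}(s,1-a)$, show each pair at $1-s$ equals the common kernel $\tfrac{2\Gamma(s)}{(2\pi)^s}\cos(\pi s/2)$ times the other pair at $s$, and add. The only (immaterial) difference is that the paper quotes a ready-made formula for ${\rm{Li}}(1-s,a)$ in terms of $\zeta(s,a)$ and $\zeta(s,1-a)$, whereas you recover that identity by substituting $s\to 1-s$ in Hurwitz's formula and simplifying with $\Gamma(s)\Gamma(1-s)=\pi/\sin(\pi s)$, which is a valid equivalent computation.
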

Moreover, we show the following, which implies that $Q(s,a)$ has infinitely many zeros on the critical line $\sigma =1/2$. 
\begin{theorem}\label{th:main}
For any $0 < a \le 1/2$, there exist positive constants $A(a)$ and $T_0(a)$ such that the number of zeros of $Q(s,a)$ on the line segment from $1/2$ to $1/2 +iT$ is greater than $A(a) T$ whenever $T \ge T_0(a)$. 
\end{theorem}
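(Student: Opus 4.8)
The result is a Hardy--Littlewood-type lower bound, and I would prove it by producing many sign changes of the real function attached to $Q(s,a)$ by the functional equation. By Theorem~\ref{th:FE}, $Q(s,a)$ satisfies \emph{exactly} Riemann's functional equation $Q(1-s,a)=\chi(s)Q(s,a)$ with $\chi(s)=2(2\pi)^{-s}\Gamma(s)\cos(\pi s/2)$, the same factor as for $\zeta(s)$; moreover $\overline{Q(\bar s,a)}=Q(s,a)$, since $\zeta(s,a)$ has real Dirichlet coefficients and $\overline{{\rm{Li}}(\bar s,a)}={\rm{Li}}(s,1-a)$. Writing $\theta(t)=\Im\log\Gamma(\tfrac14+\tfrac{it}{2})-\tfrac t2\log\pi$ for the Riemann--Siegel theta function, so that $\chi(\tfrac12+it)=e^{2i\theta(t)}$, the function $Z_Q(t):=e^{i\theta(t)}Q(\tfrac12+it,a)$ is real-valued for real $t$, and its real zeros are exactly the ordinates of the zeros of $Q(s,a)$ on $\sigma=1/2$; hence it suffices to bound below the number of sign changes of $Z_Q$ in $[0,T]$. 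Applying Hurwitz's formula to $\zeta(1-s,a)$ and $\zeta(1-s,1-a)$ and adding gives $C(1-s,a)=\chi(s)P(s,a)$ and $P(1-s,a)=\chi(s)C(s,a)$, where $P(s,a):=\zeta(s,a)+\zeta(s,1-a)$ and $C(s,a):={\rm{Li}}(s,a)+{\rm{Li}}(s,1-a)=2\sum_{n\ge1}\cos(2\pi na)n^{-s}$; these identities give
\[
Z_Q(t)=\Re\bigl(e^{i\theta(t)}C(\tfrac12+it,a)\bigr),
\]
realizing $Z_Q$ as the real part of an $e^{i\theta}$-twist of a genuine Dirichlet series. (The case $a=\tfrac12$ is elementary: $Q(s,\tfrac12)=(2^s+2^{1-s}-2)\zeta(s)$, and the first factor already has $\gg T$ zeros on $\sigma=1/2$; so one may assume $0<a<1/2$.)

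Next I would assemble the analytic inputs, each a routine adaptation of a classical fact about $\zeta$. From the approximate functional equation for $C(s,a)$ one obtains a Riemann--Siegel-type formula $Z_Q(t)=2\sum_{n\le\sqrt{t/2\pi}}\cos(2\pi na)n^{-1/2}\cos(\theta(t)-t\log n)+\sum_{\mu\le\sqrt{t/2\pi}}\mu^{-1/2}\cos(\theta(t)-t\log\mu)+O_a(t^{-1/4})$, with $\mu$ ranging over $\{m+a:m\ge0\}\cup\{m+1-a:m\ge0\}$. Putting $n_0=n_0(a):=\min\{n\ge1:\cos(2\pi na)\ne0\}$, so $n_0\in\{1,2\}$, a stationary-phase evaluation yields the resonance identity
\[
\int_T^{2T}Z_Q(t)\cos\bigl(\theta(t)-t\log n_0\bigr)\,dt=c(a)\,T+O_a(T^{3/4}),\qquad c(a):=\frac{\cos(2\pi n_0 a)}{\sqrt{n_0}}\ne0
\]
(only the $n=n_0$ term resonates, since no $\mu$ is a positive integer when $0<a<1/2$), and the same method gives $\int_u^{u'}Z_Q(t)\,dt\ll_a u'^{1/4}$. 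The mean value theorem for Dirichlet series applied to $C$ gives $\int_0^TZ_Q(t)^2\,dt\asymp_a T\log T$ (the cross term $\Re\int_0^T\overline{P(\tfrac12+it,a)}\,C(\tfrac12+it,a)\,dt$ is $O_a(T)$, since $P$ and $C$ share no frequency), and the classical fourth-moment bound for the short Dirichlet polynomial approximating $C$ gives $\int_0^TZ_Q(t)^4\,dt\ll_a T(\log T)^4$.

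With these in hand I would run the Hardy--Littlewood argument. Already the resonance identity forces $\int_T^{2T}|Z_Q(t)|\,dt\gg_aT$, so $Z_Q$ changes sign in $[T,2T]$ for all large $T$, hence $Q(s,a)$ has infinitely many zeros on the line. For the quantitative count I would partition $[T,2T]$ into the maximal intervals $I$ on which $Z_Q$ keeps a constant sign; integrating $\int_IZ_Q(t)\cos(\theta(t)-t\log n_0)\,dt$ by parts (the boundary terms vanish because $Z_Q$ vanishes at the sign changes) and using $\theta'\asymp\log T$, $\theta''\asymp 1/T$ bounds it by $\ll\mathrm{Var}_I(Z_Q)/\log T+T^{-1}(\log T)^{-2}\int_I|Z_Q|$; summing over $I$, invoking the resonance identity, and using the second moment to see that the sum of the second pieces is $o(1)$, one gets $\mathrm{Var}_{[T,2T]}(Z_Q)\gg_aT\log T$. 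On the other hand, on every such $I$ one has $\int_I|Z_Q|=|\int_IZ_Q|\ll_aT^{1/4}$, and H\"older's inequality together with the second and fourth moments controls how the $L^2$-mass and the oscillation of $Z_Q$ can spread over the $\asymp T$ unit subintervals of $[T,2T]$. Performing this bookkeeping as in the classical proof that $N_0(T)\gg T$ for $\zeta$, one concludes that $Z_Q$ changes sign on a positive proportion of those unit subintervals, hence $\gg_aT$ times in $[T,2T]$; summing over dyadic ranges yields the constants $A(a)>0$ and $T_0(a)$.

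The decisive step is the last one. All the inputs control $Z_Q$ only on average, and the real obstacle is to exclude the scenario in which the $L^2$-mass of $Z_Q$ sits on a sparse set of tall spikes---where only the convexity bound $|Z_Q(t)|\ll t^{1/6}$ is available, far above the typical size $\asymp(\log T)^{1/2}$---which could in principle mask the oscillation detected by the resonance identity. It is exactly the fourth-moment bound, fed into the delicate interval bookkeeping of the Hardy--Littlewood method, that rules this out and brings the count up to the full order $T$ rather than some smaller power $T^{\beta}$; everything else is a faithful transcription of standard facts about $\zeta(s,a)$ and ${\rm{Li}}(s,a)$.
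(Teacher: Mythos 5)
Your reduction to the real function $Z_Q(t)=e^{i\theta(t)}Q(\tfrac12+it,a)=\Re\bigl(e^{i\theta(t)}C(\tfrac12+it,a)\bigr)$ is correct, and your resonance main term $\cos(2\pi n_0a)/\sqrt{n_0}$ is the right analogue of the paper's main term $2k|\cos(2\pi a)|$ in Lemma \ref{lem:Inlow} (the paper instead excludes $a=1/4,1/2$ by factoring $Q$ through $\zeta$). But the step you yourself call decisive is not an argument, and it is not ``as in the classical proof'': Hardy--Littlewood's proof that the zeta function has $\gg T$ critical-line zeros, and the paper's adaptation of it, uses no fourth moment, no total-variation bound and no unit-interval bookkeeping. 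Its engine is a \emph{local} mean-square cancellation estimate: for the damped, rotated integral $I_{x,k}(\tfrac12+it,a)$, built from the theta-type identity $G_a(u)=u^{-1}G_a(1/u)$ of Proposition \ref{lem:IR} and Parseval, Lemma \ref{lem:upper} gives $\int|I_{x,k}|^2\ll(Kk+\varepsilon k^2)\delta^{-1/2}$, so that (after the common damping factor) $I_a(t)$ is typically of size $\sqrt{k}$, while $J_a(t)\gg k$ on average by Lemmas \ref{lem:Inlow} and \ref{lem:C-S}; since $J_a(t)=|I_a(t)|$ on zero-free intervals of length $2k$, a positive proportion of such intervals must contain a zero, which is exactly where the factor $T$ comes from. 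Your global inputs cannot replace this: the resonance identity together with the per-interval bound $|\int_IZ_Q|\ll T^{1/4}$ only yields $\gg T^{3/4}$ sign changes (split $[T,2T]$ into maximal constant-sign intervals), and the deduction $\mathrm{Var}_{[T,2T]}(Z_Q)\gg T\log T$ constrains sign changes not at all -- a function of constant sign can have arbitrarily large variation, and a nonnegative function can perfectly well satisfy $\int Z_Q(t)\cos(\theta(t)-t\log n_0)\,dt\gg T$. The H\"older/fourth-moment ``bookkeeping'' that is supposed to bridge from $T^{3/4}$ to $T$ is precisely the missing proof.

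Moreover, one of your inputs is not available at the stated generality. The bound $\int_0^TZ_Q(t)^4\,dt\ll T(\log T)^4$ is not classical: on the critical line $Z_Q$ is approximated either by a Dirichlet polynomial of length $\asymp t$ (whose fourth moment via mean-value theorems is only $O(T^2\log^3T)$) or by a balanced approximate functional equation whose dual sum has the non-integer frequencies $m+a$, $m+1-a$; the products $(m+a)(n+a)$ arising from its square can be abnormally close together when $a$ has very good rational approximations (Liouville-type $a$), so Montgomery--Vaughan type estimates do not give $T(\log T)^4$ uniformly in $a$, whereas the theorem is claimed for every $0<a\le 1/2$. (Similarly, $\int_0^T Z(t)\,dt\ll T^{1/4}$ is a deep result of Korolev and Jutila even for $\zeta$; a weaker bound would serve you, but only for the $T^{3/4}$ count.) By contrast, the paper's Lemmas \ref{lem:Inlow}--\ref{lem:upper} involve only integer frequencies and the theta transformation, hence are uniform in $a$. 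To repair your argument you would need either to prove the local cancellation estimate in your language (a mean-square bound over $t$ for $\int_{t-k}^{t+k}Z_Q(u)w(u)\,du$ with a suitable weight, i.e.\ a re-proof of Lemma \ref{lem:upper}), or to supply an honest fourth-moment bound valid for all $a$ together with a complete sign-change bookkeeping; as written, neither is done.
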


We share some remarks on the functional equation and zeros on the critical line of zeta functions in the next three subsections. Note that the quadrilateral zeta function $Q(s,a)$ also has the following remarkable properties. From \cite[(2.4)]{NRCQZ}, it holds that
\[
Q(0,a) = -1/2 = \zeta (0) \quad \mbox{for all} \quad 0 < a \le 1/2. 
\]
For $n \in {\mathbb{N}}$, it is shown in \cite[Corollary 3.7]{NaIn} that $Q(-n,a)$ and $\pi^{-2n}Q(2n,a)$ are rational functions of $e^{2\pi ia}$ with rational coefficients. By (\ref{eq:zfe1q}), the function $Q(s,a)$ has simple zeros at the negative even integers.  Furthermore, it is proved in \cite[Theorem 1.1]{NRCQZ} that there exists $a_0 = 0.1183751396...$ such that \\
{\rm{(1)}} $Q(\sigma, a_0)$ has a unique double real zero at $\sigma = 1/2$ when $\sigma \in (0,1)$,\\ 
{\rm{(2)}} for any $a \in (a_0,1/2]$, the function $Q(\sigma, a)$ has no real zero in $\sigma \in (0,1)$,\\
{\rm{(3)}} for any $a \in (0,a_0)$, $Q(\sigma, a)$ has at least two real zeros in $\sigma \in (0,1)$. \\
In addition, for $0 <a \le 1/2$, it is shown in \cite[Proposition 1.5]{NRCQZ} that
\[
N\bigl( T, Q(s,a)\bigr) = \frac{T}{\pi} \log T - \frac{T}{\pi} \log (2 e \pi a^2) + O_a(\log T),
\]
where $N(T, F)$ is the number of non-real zeros of a function $F(s)$ with $|\Im (s)| < T$ when $T$ is sufficiently large. Moreover, we prove in  \cite[Proposition 1.4]{NRCQZ} that $Q(s,a)$ has infinitely many complex zeros in the region of absolute convergence and the critical strip when $a \in {\mathbb{Q}} \cap (0,1/2) \setminus \{1/6, 1/4, 1/3\}$. 

\subsection{Zeros of zeta functions on the critical line}
The famous Riemann hypothesis asserts that the real part of every non-real zero of the Riemann zeta function is $1/2$. The study to establish the lower bound for the number of zeros of $\zeta (s)$ on the critical line $\sigma = 1/2$ has long history. Denote by $N_{\rm{Ri}} (T)$ the number of zeros $\rho = \beta + i\gamma$ of the Riemann zeta function $\zeta (s)$ with $\beta =1/2$ and $0 < \gamma \le T$. In 1914, Hardy proved that $N_{\rm{Ri}} (T) \to \infty$ if $T \to \infty$. Later, Hardy and Littlewood \cite{HL} showed the following (see also \cite[Chapter 11.2]{ERZ} and \cite[Chapter 10.7]{Tit}):
\begin{thm}[{Hardy and Littlewood \cite[Theorem A]{HL}}]\label{th:HL}
There are constants $A>0$ and $T_0 >0$ such that $N_{\rm{Ri}} (T) \ge A T$ whenever $T > T_0$.
\end{thm}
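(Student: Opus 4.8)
The plan is to follow the method of Hardy and Littlewood, whose first step is to replace $\zeta(1/2+it)$ by a real-valued proxy. Introduce the Riemann--Siegel theta function $\vartheta(t):=\Im\log\Gamma(\tfrac14+\tfrac{it}{2})-\tfrac t2\log\pi$ and Hardy's function $Z(t):=e^{i\vartheta(t)}\zeta(\tfrac12+it)$; the functional equation for $\zeta$ shows that $Z(t)$ is real for real $t$, that $Z$ is real-analytic, and that $|Z(t)|=|\zeta(\tfrac12+it)|$. Since $e^{i\vartheta(t)}$ never vanishes, the zeros of $Z$ in $(0,T]$ are exactly the ordinates of the critical-line zeros of $\zeta$ there; in particular the number of sign changes of $Z$ on $[0,T]$ is a lower bound for $N_{\rm Ri}(T)$, so it suffices to exhibit $\gg T$ sign changes of $Z$.

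Next I would assemble two analytic ingredients. First, an upper bound for the primitive: $\int_0^T Z(t)\,dt\ll T^{1/4}$, and more generally $\bigl|\int_\alpha^\beta Z(t)\,dt\bigr|\ll\beta^{1/4}$ uniformly for $0\le\alpha\le\beta\le T$; this is classical and follows from the Riemann--Siegel approximation $Z(t)=2\sum_{n\le\sqrt{t/2\pi}}n^{-1/2}\cos(\vartheta(t)-t\log n)+O(t^{-1/4})$ after estimating the resulting oscillatory integrals. Second, the mean-square asymptotics $\int_0^T Z(t)^2\,dt\sim T\log T$ together with a higher-moment bound such as $\int_0^T Z(t)^4\,dt\ll T(\log T)^4$ (the second and fourth moments of $\zeta$ on the critical line); by Hölder's inequality these force $\int_0^T|Z(t)|\,dt\gg T(\log T)^{-1/2}$.

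For the counting step, partition $[0,T]$ at the sign changes of $Z$ into maximal intervals $J$ of constant sign. On each $J$ one has $\bigl|\int_J Z\bigr|=\int_J|Z|$, so, with $F(x):=\int_0^x Z$ and $|F|\ll T^{1/4}$, summing over blocks gives $\int_0^T|Z|\ll T^{1/4}(V+1)$, where $V$ is the number of sign changes; combined with the lower bound for $\int_0^T|Z|$ this already yields $V\gg T^{3/4-\varepsilon}$. To upgrade to the full linear bound $V\gg T$ one must inject sharper information: following Hardy and Littlewood, one uses that the oscillatory-integral bounds in the previous paragraph persist for $\int_I Z$ over \emph{short} subintervals $I$ (equivalently, for $Z$ integrated against slowly oscillating weights), which forces the constant-sign blocks carrying the bulk of the mass of $\int_0^T|Z|$ to be short; a dyadic decomposition into windows of a suitably chosen length, on each of which the short-interval first-moment bound and the second-moment lower bound are played against one another, then produces $\gg T$ distinct sign changes, and hence $N_{\rm Ri}(T)\ge AT$ for a suitable $A>0$ once $T\ge T_0$.

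The main obstacle is precisely this last upgrade: the crude combination of the first-integral bound with the moment estimates only reaches a fractional power of $T$, and squeezing out the linear lower bound $\gg T$ is where the Hardy--Littlewood argument does its real work, resting on quantitatively delicate short-interval (or twisted-integral) bounds for $Z$ and a careful accounting of where the sign changes lie. The reduction to Hardy's function and the mean-value inputs are, by comparison, routine.
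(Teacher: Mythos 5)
You should first note that the paper itself does not prove this statement: Theorem~A is quoted from Hardy and Littlewood, and the only argument of this kind actually carried out in the paper is the proof of Theorem~\ref{th:main}, the analogue for $Q(s,a)$, which follows the Hardy--Littlewood method as presented in Edwards and Titchmarsh. Measured against that, your reduction to sign changes of Hardy's function $Z(t)$, and the observation that a sign change must occur in any interval $I$ with $\int_I|Z|>|\int_I Z|$, is the correct skeleton (it is exactly the paper's comparison of $J_a(t)$ with $|I_a(t)|$). But the concrete steps you supply stop short of $\gg T$. As you yourself compute, combining the bound for the primitive $F(x)=\int_0^x Z$ with the $L^1$ lower bound coming from the second and fourth moments yields only $V\gg T^{3/4-\varepsilon}$; and a global $L^1$ lower bound obtained from global moments does not localize, so it cannot show that a positive proportion of fixed-length windows $[t,t+H]$ satisfy $\int|Z|>|\int Z|$ --- a priori the mass of $\int_0^T|Z|$ could concentrate on few windows. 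The paragraph in which you ``upgrade'' to $\gg T$ is therefore not a proof but a restatement of what remains to be proved. (A side issue: $\int_0^T Z(t)\,dt\ll T^{1/4}$ is not a routine consequence of the Riemann--Siegel formula; the bound one gets cheaply is weaker, and $O(T^{1/4})$ is a much later theorem. This does not sink the $T^{3/4-\varepsilon}$ step, but it should not be labelled classical.)

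The two quantitative inputs that actually close the gap, and that are absent from your proposal, are the ones the paper adapts for $Q(s,a)$. First, a \emph{localized} lower bound with a fixed window length $2k$: from the approximate functional equation, $\int_{t-k}^{t+k}|\zeta(\tfrac12+iv)|\,dv\ge 2k-C-(\text{oscillatory tail})$, where the main term $2k$ comes from the constant term of the Dirichlet polynomial and the tail is shown to have mean $O(B)$ over $t\in[A,B]$ by Cauchy--Schwarz on the off-diagonal terms (the paper's Lemmas~\ref{lem:Inlow} and~\ref{lem:C-S}, with $2k|\cos(2\pi a)|$ in place of $2k$). Second, an upper bound for the \emph{signed} window integral: with $I(t)=\frac{1}{2\pi i}\int_{s-ik}^{s+ik}\pi^{-v/2}\Gamma(v/2)\zeta(v)\,x^{v-1}\,dv$ at $s=\tfrac12+it$ and $x=e^{-i\pi/4}e^{i\delta/2}$, the theta-function integral representation together with Parseval gives $\int_{-\infty}^{\infty}|I(t)|^2\,dt\ll(Kk+\varepsilon k^2)\delta^{-1/2}$ (the paper's Proposition~\ref{lem:IR} and Lemma~\ref{lem:upper}). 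On the set $S$ of $t$ whose window is zero-free one has $|I(t)|=J(t)$, and playing the localized lower bound against the Parseval upper bound via Cauchy--Schwarz forces $|S|$ to be a proper fraction of $[A,B]$ with $B\asymp\delta^{-1}$, whence $\nu\gg\delta^{-1}\asymp T$. Without the Parseval/theta-function estimate and the localized approximate-functional-equation lower bound, your argument genuinely stalls at a fractional power of $T$, so the proposal as written has a gap precisely at the step that constitutes the theorem.
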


In 1942, Selberg proved that there exists $A>0$ such that 
\[
N_{\rm{Ri}} (T) \ge A T\log T. 
\]
Note that the numerical value of the constant $A$ in Selberg's theorem was very small. However, Levinson \cite{Lev} greatly improved Selberg's result and showed that $A \ge 1/3$. Furthermore, Conrey \cite{Con} proved that $A \ge 0.4088$. The current (June 2021) best result, which was proved by K\"uhn, Robles, and Zeindler \cite{KRZ}, for the lower bound of $A$ is
\[
A \ge 0.410725 .
\]

It is well-known that the Riemann zeta function $\zeta (s)$ does not vanish in the region of absolute convergence by the Euler product. Next, we review some facts about the zeros on the vertical line $\sigma =1/2$ of the Epstein and Hurwitz zeta functions, which have complex zeros in the half-plane $\sigma >1/2$ (e.g., \cite[Chapter 7.4.3]{KV} and \cite[Chapter 8.4]{LauGa}).  

Let $B(x,y)= a x^2 + bxy + c y^2$ be a positive definite integral binary quadratic form, and denote by $r_{\!B}(n)$ the number of solutions of the equation $B(x,y) = n$ in integers $x$ and $y$. Then, the Epstein zeta function for the form $B$ is defined by the ordinary Dirichlet series
\[
\zeta_B(s) := \sum_{(x,y) \in {\mathbb{Z}}^2\setminus (0,0)} \frac{1}{B(x,y)^s}
= \sum_{n=1}^\infty \frac{r_{\!B}(n)}{n^s}
\]
for $\sigma >1$. It is widely known that the function $\zeta_B(s)$ admits analytic continuation into the entire complex plane except for a simple pole at $s = 1$ with residue $2\pi \Delta^{-1}$, where $\Delta := \sqrt{4ac-b^2}$ (e.g., \cite[Section 1]{Ep}). Moreover, the function $\zeta_B(s)$ satisfies the functional equation
\[
\Bigl( \frac{\Delta}{2\pi} \Bigr)^s \Gamma (s) \zeta_B(s) = 
\Bigl( \frac{\Delta}{2\pi} \Bigr)^{1-s} \Gamma (1-s) \zeta_B(1-s) .
\]

Denote by $N_{\rm{Ep}} (T)$ the number of zeros of the Epstein zeta function $\zeta_B (s)$ on the critical line and whose imaginary part is smaller than $T>0$. In 1935, Potter and Titchmarsh \cite{PT} showed that $N_{\rm{Ep}} (T) \gg T^{1/2-\varepsilon}$. Subsequently, Sankaranarayanan \cite{Sa} obtained $N_{\rm{Ep}} (T) \gg T^{1/2}/\log T$, and Jutila and Srinivas \cite{JS} proved that $N_{\rm{Ep}} (T) \gg T^{5/11-\varepsilon}$. As the current (June, 2021) best result, Baier, Srinivas, and Sangale \cite{BSS} showed that
\[
N_{\rm{Ep}} (T) \gg T^{4/7-\varepsilon}.
\]

A key to the proof of the estimation $N_{\rm{Ep}} (T) \gg T^{4/7-\varepsilon}$ shown in \cite{BSS} is the first power mean of an ordinary Dirichlet series $\sum_{n=1}^\infty b_n n^{-s}$ with $b_n \in {\mathbb{C}}$ satisfying certain conditions cannot be too small. By zeros on the critical line and the functional equation of the Epstein zeta function $\zeta_B (s)$ mentioned above, the quadrilateral zeta function $Q(s,a)$ has many analytical properties in common with the Epstein zeta function (and the Riemann zeta function). It should be mentioned that the gamma factor of $Q(s,a)$ does not depend on the parameter $0 < a < 1/2$ from Theorem \ref{th:FE}, but the gamma factor of $\zeta_B (s)$ depends on the discriminant $\Delta$ of the positive definite integral binary quadratic form $B(x,y)$. Furthermore, we can see that the lower bound for the zeros of $Q(s,a)$ on the critical line is better than that of $\zeta_B (s)$ by virtue of Theorem \ref{th:main} at present. 

For the Hurwitz zeta function $\zeta (s,a)$ with $a=1/3, 2/3$, $1/4, 3/4, 1/6,$ or $5/6$, Gonek \cite{Gon} showed that there exists a constant $0<c<1$ such that the number of zeros (including multiplicities) of $\zeta (s,a)$ on the segment $(1/2, 1/2+iT)$ is $\le (c+o(1))(T/2\pi) \log T$ as $T$ tends to infinity. Moreover, he concluded with the following conjecture:
\begin{conj}[Gonek \cite{Gon}]
If $0<a<1$ is rational and $a \ne 1/2$, then the Hurwitz zeta function $\zeta (s,a)$ has $\ll T$ zeros on the segment $(1/2, 1/2+iT)$.
\end{conj}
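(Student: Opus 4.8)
The plan is to reduce the assertion, via the classical representation of the Hurwitz zeta function at a rational argument as a combination of Dirichlet $L$-functions, to a statement about the critical-line zeros of a \emph{fixed} finite linear combination of $L$-functions, and then to exploit the \emph{unconditional} real-valued Hardy functions of the individual $L$-factors. Write $a=p/q$ in lowest terms; since $0<a<1$ and $a\neq1/2$ one has $q\ge 3$, hence $\varphi(q)\ge 2$. By orthogonality of the characters modulo $q$,
\[
\zeta(s,p/q)=\frac{q^{s}}{\varphi(q)}\sum_{\chi\bmod q}\overline{\chi}(p)\,L(s,\chi)=:\frac{q^{s}}{\varphi(q)}\,F(s),
\]
so that $\zeta(s,a)$ and $F(s)$ have exactly the same zeros on $\sigma=1/2$, and it suffices to prove $N_{0}(T):=\#\{t\in(0,T]:F(\tfrac12+it)=0\}\ll_{a}T$.

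For each $\chi\bmod q$, the functional equation of $L(s,\chi)$ (through its primitive inducing character, of conductor $d_{\chi}\mid q$ and parity $\kappa_{\chi}\in\{0,1\}$) yields a unimodular factor and a \emph{real-valued} function $Z_{\chi}(t):=e^{i\theta_{\chi}(t)}L(\tfrac12+it,\chi)$, with
\[
\theta_{\chi}(t)=\tfrac{t}{2}\log\tfrac{d_{\chi}}{\pi}+\Im\log\Gamma\!\Bigl(\tfrac{1/2+\kappa_{\chi}}{2}+\tfrac{it}{2}\Bigr)+c_{\chi},
\]
the constant $c_{\chi}$ depending only on the root number; note that $Z_{\chi}$ vanishes exactly at the critical-line zeros of $L(s,\chi)$, so on $\sigma=1/2$ we have $F(\tfrac12+it)=\sum_{\chi\bmod q}\overline{\chi}(p)\,e^{-i\theta_{\chi}(t)}Z_{\chi}(t)$. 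Grouping the characters by the pair $(d_{\chi},\kappa_{\chi})$, the phases within a group differ only by constants, so each group collapses to $e^{-i\vartheta_{g}(t)}P_{g}(t)$, where $P_{g}$ is a fixed complex-linear combination of the real functions $\{Z_{\chi}\}$ of the group and $\vartheta_{g}'(t)=\tfrac12\log(d_{g}/\pi)+O(1/t)$; distinct groups therefore have phases whose difference either drifts apart linearly (different conductors) or tends to a nonzero multiple of $\pi/4$ (same conductor, opposite parity). Because $q\ge3$ always forces characters of mixed conductor and parity, $F$ possesses no Hardy function of its own — precisely the structural feature that is present for $Q(s,a)$ and underlies Theorem~\ref{th:main}, but is absent here, and this is why one expects $\ll T$ rather than $\gg T\log T$ critical-line zeros.

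To localise the zeros, one separates real and imaginary parts in $\sum_{g}e^{-i\vartheta_{g}(t)}P_{g}(t)=0$ and invokes the phase structure above. When $\varphi(q)=2$ — the cases $q\in\{3,4,6\}$ treated by Gonek — there are exactly two groups: the even group $\{\chi_{0}\}$ of conductor $1$, with $Z_{\chi_{0}}$ vanishing at the critical zeros of $\zeta$, and the odd group carrying the quadratic character $\chi_{1}$; their phases drift apart linearly, and a short computation shows that every zero of $F$ with $t$ large is either a \emph{resonance point} — a zero of $\sin(\theta_{\chi_{0}}(t)-\theta_{\chi_{1}}(t))$ at which one additional real equation holds — or a point where $Z_{\chi_{0}}$ and $Z_{\chi_{1}}$ vanish simultaneously. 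Since the resonance points form a discrete set with $O_{a}(1)$ points per unit interval, one obtains
\[
N_{0}(T)\ \le\ \#\{\text{resonance points}\le T\}+\#\{t\le T:\zeta(\tfrac12+it)=L(\tfrac12+it,\chi_{1})=0\}+O(1),
\]
the first term being $\ll T$ automatically. For $\varphi(q)\ge4$ the same picture is expected but the analysis is more intricate, since a group may carry several characters and its contribution $P_{g}(t)$ is then complex-valued; there one would estimate directly how often the curve $t\mapsto F(\tfrac12+it)$ passes through the origin, which again should reduce to controlling the simultaneous smallness of the component functions $Z_{\chi}$.

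Thus the assertion comes down to an unconditional bound of the shape
\[
\#\{t\le T:\ L(\tfrac12+it,\chi_{1})=L(\tfrac12+it,\chi_{2})=0\}\ \ll_{q}\ T
\]
for distinct characters $\chi_{1},\chi_{2}\bmod q$ (with $\chi_{0}$, i.e.\ $\zeta$, allowed) — equivalently, control of how small a combination of $L$-values can be on the critical line, a negative-moment type estimate. Heuristically this count is $\ll T^{\varepsilon}$, and it follows immediately from the simplicity and ``linear independence of zeros'' conjectures for Dirichlet $L$-functions; but proving even $\ll T$ unconditionally seems to require genuinely new input — for instance, no bound substantially better than the trivial $O(T\log T)$ is presently known for the number of multiple critical zeros of the Dedekind zeta function $\zeta(s)L(s,\chi_{1})$, among which the coincidences lie. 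This is the main obstacle, and the reason the statement remains a conjecture; Gonek's partial result — that for $a\in\{1/3,2/3,1/4,3/4,1/6,5/6\}$ the count on the segment is at most $(c+o(1))\tfrac{T}{2\pi}\log T$ with an explicit $c<1$ — is exactly what the dichotomy above delivers once the coincidence count is bounded crudely by $N(T,L(\cdot,\chi_{1}))$ and then improved by a mean-value argument showing that not every zero of $L(\cdot,\chi_{1})$ can be a coincidence.
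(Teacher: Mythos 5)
The statement you were asked to prove is not a theorem of this paper: it is displayed as an open \emph{conjecture}, attributed to Gonek, and the paper offers no proof of it whatsoever. It is quoted precisely as evidence that obtaining good lower (or upper) bounds for critical-line zeros of zeta functions lacking an Euler product is hard, which is what makes Theorem \ref{th:main} for $Q(s,a)$ noteworthy. So there is no argument in the paper to compare yours against, and your write-up, read as a proof, contains a genuine gap --- one you candidly flag yourself. The reduction of $\zeta(s,p/q)$ to $F(s)=\sum_{\chi\bmod q}\overline{\chi}(p)L(s,\chi)$ via the identity (\ref{eq:k1}) is correct, and for $\varphi(q)=2$ your dichotomy is sound: writing each term with its real Hardy function, a critical-line zero of $F$ forces either a resonance of the two phases (whose difference has bounded derivative, hence $\ll T$ such points) or a simultaneous vanishing of $\zeta$ and $L(\cdot,\chi_1)$ at the same height. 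But the second count is exactly where the proof stops being a proof: no unconditional bound better than the trivial $O(T\log T)$ is known for the number of common critical zeros of two distinct $L$-functions, so your conclusion $N_0(T)\ll T$ rests on an open estimate. For $\varphi(q)\ge 4$ you do not even complete the reduction, only sketch what it "should" look like.

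In short, what you have produced is a reasonable conditional reduction and a correct diagnosis of the obstruction, very much in the spirit of the paper's own Remark invoking Bombieri--Hejhal (which likewise needs GRH plus spacing hypotheses to say anything about $100\%$ of zeros of such combinations), but it is not a proof of the conjecture, and the paper does not contain one either. If you want to salvage something rigorous from your argument, the unconditional content is only the $\varphi(q)=2$ statement that all but $O(T)$ of the critical-line zeros of $\zeta(s,a)$ up to height $T$ are common zeros of $\zeta(s)$ and $L(s,\chi_1)$; the claim that this coincidence set has size $\ll T$ must be stated as a hypothesis, not asserted.
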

Based on this conjecture and the facts above, we can guess that proving the existence of $\gg T$ zeros on the line segment $(1/2, 1/2+iT)$ of the Hurwitz or Epstein zeta functions is difficult because these zeta functions have no Euler product in general. However, we show that the quadrilateral zeta function $Q(s,a)$ has $\gg T$ zeros on the line segment $(1/2, 1/2+iT)$ even though $Q(s,a)$ cannot be written as an Euler product (see (\ref{eq:defQSA})). 

\begin{remark}
The quadrilateral zeta function $Q(s,a)$ with $a \in {\mathbb{Q}}$ can be essentially expressed as a linear combination of Euler products from (\ref{eq:qq}). Hence, under the GRH and some assumptions on well-spacing of zeros for Dirichlet $L$-functions, we could show that $Q(s,a)$ with $a \in {\mathbb{Q}} \cap(0,1/2) \setminus \{1/3,1/4,1/6\}$ have 100 \% of zeros on the line $\sigma = 1/2$ if we could replace the function $\sum_{j=1}^N b_j L(s,\chi_j)$, where $b_j \in {\mathbb{R}} \setminus \{ 0\}$, in Bombieri and Hejhal \cite[Theorem A]{BH} by the function $\sum_{j=1}^N (\beta_{1j} + \beta_{2j} q^s ) L(s,\chi_j)$, where $\beta_{1j}, \beta_{2j} \in {\mathbb{C}} \setminus \{ 0\}$ and $q$ is a natural number. However, it seems to be extremely difficult for us to relax their assumptions in \cite[Theorem A]{BH} as above (even when $\beta_{2j}=0$). It is worth noting that $Q(s,a)$ with $a \in {\mathbb{R}} \setminus {\mathbb{Q}}$ can be expressed as neither an ordinary Dirichlet series nor a linear combination of Euler products. Despite these facts, we can prove Theorem \ref{th:main} by modifying the proof of Hardy and Littlewood's classical ideas in \cite[Sections 2, 3, and 4]{HL} (see also \cite[Chapter 11.2]{ERZ} and \cite[Chapter 10.7]{Tit}). 
\end{remark}

\subsection{Hamburger's, Hecke's and Knopp's Theorems}
It is widely known that $\zeta (s)$ satisfies the functional equation 
\begin{equation}\label{eq:RZfe}
\zeta(1-s) =\frac{2\Gamma (s)}{(2\pi )^s} \cos \Bigl( \frac{\pi s}{2} \Bigr) \zeta(s).
\end{equation}
As the first converse theorem for the Riemann zeta function $\zeta (s)$, Hamburger \cite{Ham} proved that $\zeta(s)$ is characterized by the functional equation (\ref{eq:RZfe}) (see also Siegel \cite{Sie} and Titchmarsh \cite[Chapter 2.13]{Tit}). 
\begin{thm}[Hamburger {\cite[Satz 1]{Ham}}]
Let $G(s)$ be an entire function of finite order, $P(s)$ be a polynomial, and suppose that
\begin{equation}
f(s) := \frac{G(s)}{P(s)} = \sum_{n=1}^\infty \frac{a(n)}{n^s}, \qquad a(n) \in {\mathbb{C}}, \tag{H1}
\end{equation}
the series being absolutely convergent for $\sigma >1$. Assume that
\begin{equation}
\pi^{-s/2} \Gamma \Bigl( \frac{s}{2} \Bigr) f(s) = \pi^{-(1-s)/2} \Gamma \Bigl( \frac{1-s}{2} \Bigr) f(1-s).
\tag{H2}
\end{equation}
Then, we have $f(s) = C\zeta(s)$, where $C$ is a constant.
\end{thm}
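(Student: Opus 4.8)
The plan is to follow the classical argument of Siegel~\cite{Sie} (see also Titchmarsh~\cite[Chapter~2.13]{Tit}): to translate the functional equation (H2) into a modular transformation law for an attached theta series, and then to invoke the rigidity of weight-$1/2$ forms for the theta group. First, since $\sum a(n)n^{-s}$ converges absolutely for $\sigma>1$ we have $a(n)=O(n^{1+\varepsilon})$, so
\[
\psi(x):=\sum_{n=1}^{\infty}a(n)e^{-\pi n^{2}x}
\]
is holomorphic on $\{\Re x>0\}$ and decays exponentially as $x\to+\infty$. From $\int_{0}^{\infty}e^{-\pi n^{2}x}x^{s/2-1}\,dx=\pi^{-s/2}\Gamma(s/2)n^{-s}$ we obtain, for $\Re s$ sufficiently large,
\[
\Lambda(s):=\pi^{-s/2}\Gamma\Bigl(\frac{s}{2}\Bigr)f(s)=\int_{0}^{\infty}\psi(x)x^{s/2-1}\,dx ,
\]
and (H2) is precisely the statement $\Lambda(s)=\Lambda(1-s)$.

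Next I would establish a transformation law for $\psi$. By (H1) the quotient $f=G/P$ is meromorphic of finite order, so $\Lambda$ is meromorphic on $\mathbb{C}$; combining this with Stirling's formula, the relation $\Lambda(s)=\Lambda(1-s)$, and the Phragm\'en--Lindel\"of principle, $\Lambda$ is of polynomial growth on every vertical line. Since $\sum a(n)n^{-s}$ is holomorphic and bounded for $\sigma>1$, $\Lambda$ has no poles there, hence none for $\sigma<0$ by the functional equation; thus $\Lambda$ has only finitely many poles, all in the strip $0\le\sigma\le1$, where $\Gamma(s/2)$ contributes only the simple pole at $s=0$. Mellin inversion then gives $\psi(x)=\frac{1}{2\pi i}\int_{(\sigma_{0})}\Lambda(s)x^{-s/2}\,\frac{ds}{2}$ for large $\sigma_{0}$; moving the contour to $\Re s=-\sigma_{0}$ (legitimate by the growth bound), collecting the residues, and then substituting $s\mapsto1-s$ and applying $\Lambda(s)=\Lambda(1-s)$ in the shifted integral, one arrives at
\[
\psi(x)=\frac{1}{\sqrt{x}}\,\psi\Bigl(\frac{1}{x}\Bigr)+P_{0}(x),
\]
where $P_{0}$ is a finite $\mathbb{C}$-linear combination of terms $x^{-\rho/2}(\log x)^{j}$ with $\rho$ ranging over the poles of $\Lambda$ in $0\le\sigma\le1$. (For $f=\zeta$ this reduces to Jacobi's identity, with $P_{0}(x)=\tfrac12x^{-1/2}-\tfrac12$.)

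The last step is a rigidity argument. Putting $\tau=ix\in\mathbb{H}$, set $\widehat{G}(\tau):=\alpha+\sum_{n\ge1}a(n)e^{i\pi n^{2}\tau}$, where $\alpha$ is the coefficient of $x^{-1/2}$ in $P_{0}$; this is a $2$-periodic function, holomorphic on $\mathbb{H}$, and a genuine power series in $q=e^{i\pi\tau}$ at the cusp $\infty$, whose behaviour under $\tau\mapsto-1/\tau$ is dictated by the transformation law above. Iterating $\tau\mapsto-1/\tau$ and exploiting the $q$-expansion at $\infty$, a computation with the appropriate branches of $\sqrt{\cdot}$ forces the terms of $P_{0}$ with $\rho\notin\{0,1\}$ to vanish; moreover the relation $\operatorname{Res}_{s=0}\Lambda=-\operatorname{Res}_{s=1}\Lambda$ (forced by $\Lambda(s)=\Lambda(1-s)$) makes $P_{0}(x)=\alpha(x^{-1/2}-1)$, so that $\widehat{G}$ is a holomorphic modular form of weight $1/2$ on the theta group $\Gamma_{\theta}=\langle\,\tau\mapsto\tau+2,\ \tau\mapsto-1/\tau\,\rangle$, regular at the cusps. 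The space of such forms is one-dimensional, spanned by $\theta(\tau)=\sum_{n\in\mathbb{Z}}e^{i\pi n^{2}\tau}$; comparing $q$-expansions gives $a(n)=a(1)$ for all $n\ge1$, whence $f(s)=a(1)\sum_{n\ge1}n^{-s}=C\zeta(s)$ with $C=a(1)$.

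The main obstacle is this last rigidity step: showing that no residual terms beyond those attached to $s=0,1$ survive in $P_{0}$ --- equivalently, that the polynomial $P$ in (H1) can make $f$ singular only at $s=1$ --- together with the verification that $\widehat{G}$ satisfies the growth and regularity hypotheses of a weight-$1/2$ form on $\Gamma_{\theta}$ and the one-dimensionality of that space. Siegel handles the first point by an explicit manipulation of the transformation law rather than by quoting the theory of modular forms, and that manipulation, powered by the bounds of the previous step, is the technical heart of the proof. A pole of $f$ permitted by $P$ is harmless: it merely produces the admissible $x^{-1/2}$ term in $P_{0}$.
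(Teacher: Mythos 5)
First, a remark on the comparison itself: the paper does not prove this statement. It is quoted as a classical result of Hamburger (Theorem B), with pointers to Hamburger, Siegel, and Titchmarsh, so there is no in-paper proof to measure yours against. On its own terms, the first two steps of your sketch are correct and standard: passing to $\Lambda(s)=\pi^{-s/2}\Gamma(s/2)f(s)=\int_0^\infty\psi(x)x^{s/2-1}\,dx$, using the finite order of $G$, Phragm\'en--Lindel\"of, and Mellin inversion with a contour shift to obtain $\psi(x)=x^{-1/2}\psi(1/x)+P_0(x)$, where $P_0$ is a finite sum of residue terms $x^{-\rho/2}(\log x)^j$. This is exactly how Siegel and Titchmarsh begin.

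The gap is in your rigidity step, and it is not a minor one. You assert that iterating $\tau\mapsto-1/\tau$ and ``exploiting the $q$-expansion'' forces the terms of $P_0$ with $\rho\notin\{0,1\}$ to vanish. Iterating the involution yields only the cocycle condition $P_0+P_0|_{1/2}S=0$, and since $\Gamma_\theta$ modulo $\pm I$ is essentially the free product of $\langle \tau\mapsto\tau+2\rangle$ and $\langle \tau\mapsto-1/\tau\rangle$ there are no further group relations to exploit: infinitely many nonzero rational period functions satisfy that single condition. This is precisely the content of Knopp's theorem (Theorem C of this paper) --- dropping the support/pole restrictions produces infinitely many linearly independent solutions of the same transformation law --- so no argument resting only on group-theoretic consistency can kill $P_0$. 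What actually forces $P_0$ to collapse to $\alpha(x^{-1/2}-1)$ is the hypothesis (H1) that $f$ is an \emph{ordinary} Dirichlet series in $n^{-s}$, i.e.\ that the expansion of $\psi$ is supported on the squares $n^2$. Siegel extracts this analytically: integrating the transformation law against $e^{-\pi t^2x}$ gives $\sum_n a(n)\pi^{-1}(n^2+t^2)^{-1}$ on one side and $t^{-1}\sum_n a(n)e^{-2\pi nt}$ plus explicit elementary functions of $t$ on the other, and comparing the $i$-periodicity of the exponential sum with the location and residues of the poles $t=\pm in$ yields $a(n)=a(1)$ for all $n$, carrying the unknown residue terms along rather than first proving they vanish. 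Your sketch never engages with this mechanism. Relatedly, your closing sentence (``a pole of $f$ permitted by $P$ is harmless: it merely produces the admissible $x^{-1/2}$ term'') is incorrect: a pole at a general zero $\rho$ of $P$ contributes $x^{-\rho/2}(\log x)^j$, and showing that $f$ can only have a simple pole at $s=1$ is part of the conclusion, not an input. The appeal to one-dimensionality of weight-$1/2$ forms on $\Gamma_\theta$ would indeed finish the proof once $\widehat G$ is known to be a genuine form regular at both cusps, but reaching that point is the entire difficulty.
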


Hecke \cite[Section 1]{He} showed that Hamburger's Theorem can be rewritten as: the following three conditions characterize $\zeta (2s)$ up to a constant factor.
\begin{flushleft}
{\bf{${}$ -(1)-}} The function $\phi(s)$ is meromorphic, and $P(s)\phi(s)$ is an entire function of finite genus with a suitable polynomial $P(s)$. \\
{\bf{${}$ -(2)-}} The function $R(s)= \pi^{-s} \Gamma(s) \phi(s)$ satisfies the functional equation\\ $R(s)=R(1/2 -s)$. \\
{\bf{-(3a)-}} Both functions $\phi(s)$ and $\phi(s/2)$ can be expanded in a Dirichlet series that converges in a half-plane.\\
\end{flushleft}

Moreover, Hecke \cite{He} proved that the expressibility of $\phi(s/2)$ as a Dirichlet series in -(3a)- can be replaced by the following restriction on the poles of $\phi(s)$. More precisely, he showed that $\zeta (2s)$ (up to a constant factor) is uniquely determined by -(1)-, -(2)-, and
\begin{flushleft}
{\bf{-(3b)-}} The function $\phi(s)$ can be expanded in a Dirichlet series that converges somewhere and the only pole allowed for $\phi(s/2)$ is $s=1$. 
\end{flushleft}

It is quite natural to relax the conditions introduced by Hecke. Knopp \cite{Kn} showed the following, which implies that there are infinitely many linearly independent solutions if we drop the pole condition -(3b)- above by using the Riemann-Hecke correspondence between ordinary Dirichlet series with functional equations and modular forms or the generalized Poincar\'e series.
\begin{thm}[Knopp {\cite[Theorem 1]{Kn}}]
Let $\sigma_0 \ge 1/4$ and ${\mathcal{A}} (\sigma_0)$ be the space of all rational functions $A(s)$ with poles restricted to the vertical strip $1/2 -\sigma_0 \le \Re (s) \le \sigma_0$ and satisfying $A(1/2-s)=A(s)$. Let $A_H(\sigma_0)$ be the subspace of $A$ in $A(\sigma_0)$ such that $R(s) - A(s)$ is entire for some $R(s) = \pi ^{-s} \Gamma(s) \phi (s)$ satisfying -(1)-, -(2)-, and 
\begin{flushleft}
{\bf{-(3)-}} The function $\phi(s)$ can be expanded in a Dirichlet series that converges somewhere.
\end{flushleft}
Then with ${\mathbb{N}} \ni n > [\sigma_0/2+3/8]+2$ and $A_1, \ldots , A_n \in A(\sigma_0)$, some nontrivial linear combination of the $A_j$ is in $A_H(\sigma_0)$. 
\end{thm}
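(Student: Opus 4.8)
The plan is to prove this by the Riemann--Hecke correspondence between Dirichlet series satisfying Riemann's functional equation and automorphic forms, the route signalled in the paragraph preceding the statement and carried out in \cite{Kn}. The functional equation -(2)-, namely $R(s) = \pi^{-s}\Gamma(s)\phi(s) = R(1/2-s)$, is exactly Hecke's functional equation with period $\lambda = 2$ and weight $k = 1/2$. Writing $\phi(s) = \sum_{n \ge 1} a(n) n^{-s}$, which by -(3)- converges in some half-plane and hence has $a(n)$ of polynomial growth, the Hecke correspondence attaches to $R$ the function $f(\tau) := \sum_{n\ge1} a(n) e^{\pi i n\tau}$, holomorphic on the upper half-plane $\mathfrak{H}$. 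One checks in the usual way that $f$ transforms with weight $1/2$ and the theta-multiplier under $\tau \mapsto \tau + 2$ and under $\tau \mapsto -1/\tau$, that is, $f$ is modular of weight $1/2$ on the theta group $\Gamma_\theta$ (of index $3$ in the full modular group), that it is cuspidal at the cusp $\infty$ since the Dirichlet series carries no constant term, and that the poles of $R$ correspond precisely to the admissible singular behaviour of $f$ at the cusp: a pole of $R(s)$ at $s = \rho$ contributes, after applying the inversion and shifting the Mellin contour, a term of type $y^{-\rho}$ (with an extra power of $\log$ for a multiple pole) to $f(iy)$ near the cusp, and the symmetry $A(1/2-s) = A(s)$ matches the functional equation; for $A \in \mathcal{A}(\sigma_0)$ the exponents $\rho$ are confined to the strip $1/2 - \sigma_0 \le \Re\rho \le \sigma_0$.

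Under this dictionary, $A \in A_H(\sigma_0)$ if and only if the finite singular datum prescribed by the rational function $A(s)$ is realized by some weight-$1/2$ form on $\Gamma_\theta$ that is holomorphic on $\mathfrak{H}$ and cuspidal at $\infty$. To build such a form from a prescribed singular datum I would use Knopp's generalized Poincar\'e series of weight $1/2$ with the theta-multiplier: for $\Re\rho$ in the relevant range these series, defined where they converge and then continued by Hecke's device --- necessary because the weight $1/2$ lies far below the range $k > 2$ of absolute convergence --- are holomorphic on $\mathfrak{H}$, decay at $\infty$, and carry exactly one singularity, of type $y^{-\rho}$, at the cusp, with differentiation in $\rho$ supplying the logarithmic terms needed for multiple poles. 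Hence the map from weight-$1/2$ $\Gamma_\theta$-forms (holomorphic on $\mathfrak{H}$, cuspidal at $\infty$) to their singular data is surjective onto $\mathcal{A}(\sigma_0)$ except for a \emph{fixed} finite-dimensional obstruction space $\mathcal{O}(\sigma_0)$ of linear functionals, which by the standard Stokes/residue argument on the compact modular curve $X(\Gamma_\theta) \cong \mathbb{P}^1$ is spanned by pairings against holomorphic forms of complementary weight $3/2$ (with the conjugate multiplier) together with the finitely many conditions coming from poles of $\Gamma(s)$ that fall inside the strip.

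It then remains to bound $\dim \mathcal{O}(\sigma_0)$ and to run the linear algebra. Since the admissible order of the singularity at the cusp grows linearly with $\sigma_0$, so does the weight of the complementary holomorphic forms entering the obstruction, and the valence formula for $\Gamma_\theta$ --- which has genus $0$, two inequivalent cusps and one elliptic point of order $2$, so that a weight-$k$ form has total divisor degree $k/4$ --- gives $\dim \mathcal{O}(\sigma_0) \le [\sigma_0/2 + 3/8] + 2$ after accounting for the cusps and the elliptic contribution. Finally, given $A_1, \dots, A_n \in \mathcal{A}(\sigma_0)$ with $n > [\sigma_0/2 + 3/8] + 2 \ge \dim \mathcal{O}(\sigma_0)$, the homogeneous system $\sum_{j=1}^n c_j\, \ell(A_j) = 0$ over all $\ell \in \mathcal{O}(\sigma_0)$ consists of at most $\dim \mathcal{O}(\sigma_0) < n$ equations in the $c_j$, hence has a nontrivial solution $(c_1,\dots,c_n)$; by the surjectivity modulo $\mathcal{O}(\sigma_0)$ established above, the nonzero combination $\sum_j c_j A_j$ lies in $A_H(\sigma_0)$, which is the assertion.

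The main obstacle is the analytic heart of the second step. Because the weight is $1/2$, the generalized Poincar\'e series do not converge absolutely, so their construction requires Hecke's analytic-continuation trick (or an $\mathrm{SL}_2$-type regularization), and genuine work is then needed to verify that the continued object is holomorphic on $\mathfrak{H}$, is cuspidal at $\infty$, has exactly the one prescribed singularity of type $y^{-\rho}$ at the cusp, and transforms correctly with the theta-multiplier across the branch locus introduced by the non-integral exponent $\rho$. Equally delicate is showing that the obstruction conditions are not merely necessary but \emph{sufficient} for a singular datum to be realized, and extracting from the valence formula the precise constant $[\sigma_0/2 + 3/8] + 2$; the remaining ingredients --- the Hecke-correspondence verifications and the concluding linear algebra --- are routine.
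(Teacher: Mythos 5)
This statement is not proved in the paper at all: it is Knopp's theorem, quoted verbatim with the citation \cite[Theorem 1]{Kn} as background for the converse-theorem discussion, so there is no in-paper proof to compare against. Your proposal does correctly identify the strategy that Knopp himself uses (and that the paper's surrounding prose alludes to): pass through the Riemann--Hecke correspondence from the functional equation $R(s)=R(1/2-s)$ to weight-$1/2$ forms on the theta group, realize prescribed singular data at the cusp by generalized Poincar\'e series, and finish with linear algebra against a finite-dimensional obstruction space. The dictionary you set up ($\phi(s)=\sum a(n)n^{-s}\mapsto f(\tau)=\sum a(n)e^{\pi i n\tau}$, period $2$, weight $1/2$, poles of $R$ at $s=\rho$ corresponding to $y^{-\rho}$ terms at the cusp) is the right one.

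As a proof, however, the proposal has genuine gaps, which you yourself flag. First, the entire analytic core --- the construction of the weight-$1/2$ generalized Poincar\'e series with a single prescribed singularity of type $y^{-\rho}$ at the cusp, its continuation past the non-absolutely-convergent range, and the verification that it is holomorphic on $\mathfrak{H}$ and cuspidal --- is only announced, not carried out; this is the substance of Knopp's paper. Second, you assert but do not prove that the obstruction space is finite-dimensional, that vanishing of the obstructions is \emph{sufficient} (not merely necessary) for realizability, and that its dimension is bounded by $[\sigma_0/2+3/8]+2$. Your proposed route to that bound via the valence formula is not justified as stated: the valence formula and Riemann--Roch-type counts apply to forms whose behaviour at the cusp is controlled by an integral order of vanishing or pole, whereas here the singular exponents $\rho$ range over a continuum (an entire vertical strip), so the divisor-degree computation $k\mu/12=k/4$ for $\Gamma_\theta$ does not translate directly into a dimension count for the relevant space of linear functionals; the explicit constant has to be extracted from the Fourier-coefficient conditions in Knopp's construction, not from a valence count. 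Until those three items are supplied, what you have is an accurate road map of \cite{Kn}, not a proof.
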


According to the theorems by Hamburger, Hecke, and Knopp, we can see that the conditions to characterize $\zeta (s)$ introduced by Hamburger or Hecke are so polished that a slight weakening of their conditions leads to infinitely many counterexamples, as mentioned by Knopp. Note that Knopp's theorem does not provide any explicit representation for the coefficients of $a(n)$ of the Dirichlet series satisfying condition -(3)-. However, as analogues or improvements to Knopp's Theorem, in the next subsection, we show that the zeta function $Q(s,a)$ defined explicitly in Section 1.1 fulfills the assumption -(2)- and some modified conditions of -(1)- and -(3a)- or -(3b)-.  

\subsection{Variations of Knopp's Theorem}
Now, we consider some variations of Knopp's Theorems, namely, we properly modify conditions -(1)-, -(3a)-, and -(3b)- introduced by Hecke and prove that $Q(s,a)$ fulfills the reshaped conditions. 

First, we have the following immediately from Theorem \ref{th:FE}. 

\begin{corollary}\label{cor:1}
The function $Q(2s,a)$ satisfies -(1)-, -(2)- and
\begin{flushleft}
{\bf{-(3a')-}} Both functions $\phi(s)$ and $\phi(s/2)$ can be expanded in a {\sc{general}} Dirichlet series that converges in a half-plane.
\end{flushleft}
\end{corollary}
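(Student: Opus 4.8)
The plan is to check the three conditions directly for $\phi(s):=Q(2s,a)$, the only substantive ingredient being the functional equation of Theorem~\ref{th:FE} together with the Legendre duplication and reflection formulas for $\Gamma$.

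For -(1)- I would argue as follows. By the facts recalled before (\ref{eq:defQSA}), $\zeta(s,a)$ is holomorphic on $\mathbb{C}\setminus\{1\}$ with a simple pole at $s=1$, while ${\rm{Li}}(s,a)$ with $0<a<1$ is entire; hence by (\ref{eq:defQSA}) the function $Q(s,a)$ is meromorphic on $\mathbb{C}$ with a single simple pole at $s=1$, so $\phi(s)=Q(2s,a)$ has a single simple pole at $s=1/2$ and the choice $P(s):=2s-1$ makes $P(s)\phi(s)$ entire. Since $\zeta(s,a)$ and ${\rm{Li}}(s,a)$ are of finite order in vertical strips, so is $Q(2s,a)$, and an entire function of finite order is of finite genus; this settles -(1)-. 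Condition -(3a')- is just as direct: rewriting (\ref{eq:defQSA}) as
\[
2Q(s,a)=\sum_{n=0}^{\infty}(n+a)^{-s}+\sum_{n=0}^{\infty}(n+1-a)^{-s}+2\sum_{n=1}^{\infty}\cos(2\pi na)\,n^{-s}
\]
and ordering the frequencies $\{\log(n+a)\}\cup\{\log(n+1-a)\}\cup\{\log n\}$ increasingly exhibits $\phi(s/2)=Q(s,a)$ as a general Dirichlet series convergent for $\sigma>1$, and therefore $\phi(s)=Q(2s,a)$ as one convergent for $\sigma>1/2$.

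For -(2)- I would set $R(s):=\pi^{-s}\Gamma(s)\phi(s)=\pi^{-s}\Gamma(s)Q(2s,a)$. Replacing $s$ by $2s$ in (\ref{eq:zfe1q}) gives $Q(1-2s,a)=2(2\pi)^{-2s}\Gamma(2s)\cos(\pi s)\,Q(2s,a)$; substituting this into the identity $R(1/2-s)=\pi^{\,s-1/2}\Gamma(1/2-s)\,Q(1-2s,a)$ and then invoking the Legendre duplication formula $\Gamma(2s)=\pi^{-1/2}2^{\,2s-1}\Gamma(s)\Gamma(s+1/2)$ together with the reflection formula $\Gamma(1/2-s)\Gamma(1/2+s)=\pi/\cos(\pi s)$, one sees that the factors $\cos(\pi s)$ and $\Gamma(s+1/2)$ cancel and the remaining powers of $2$ and $\pi$ collapse, leaving precisely $R(1/2-s)=\pi^{-s}\Gamma(s)Q(2s,a)=R(s)$. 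This bookkeeping is the whole content of -(2)-.

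I do not expect a genuine obstacle: -(2)- is a few lines with standard $\Gamma$-identities and -(1)-, -(3a')- follow at once from the definition, which is why the statement is only a corollary. The one point I would make explicit concerns -(3a')-: when $a$ is irrational the three sums displayed above cannot be amalgamated into an \emph{ordinary} Dirichlet series, since the frequency sets $\{\log(n+a)\}$, $\{\log(n+1-a)\}$ and $\{\log n\}$ are not jointly contained in any $\{\log m:m\in\mathbb{N}\}$ after rescaling; this is precisely why Hecke's condition -(3a)- has to be weakened to -(3a')- in this setting.
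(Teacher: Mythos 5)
Your proposal is correct and follows exactly the route the paper intends: the paper states Corollary \ref{cor:1} as an immediate consequence of Theorem \ref{th:FE}, and your verification of -(2)- (substituting $s\mapsto 2s$ in (\ref{eq:zfe1q}) and cancelling via the duplication and reflection formulas, which indeed yields $R(1/2-s)=\pi^{-s}\Gamma(s)Q(2s,a)=R(s)$) together with the routine checks of -(1)- and -(3a')- is precisely the bookkeeping the paper leaves implicit. Your closing remark on why -(3a)- must be relaxed to -(3a')- for irrational $a$ is also consistent with the paper's discussion.
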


Next, let $\varphi$ be the Euler totient function and $\chi$ be a primitive Dirichlet character of the conductor of $q$. Let $L(s,\chi) := \sum_{n=1}^\infty \chi(n) n^{-s}$ be the Dirichlet $L$-function. Then, for $0 < r < q$, where $q$ and $r$ are relatively prime integers, we have
\begin{equation}\label{eq:k1}
\zeta (s,r/q) = \sum_{n=0}^\infty \frac{1}{(n \!+\! r/q)^s} = \sum_{n=0}^\infty \frac{q^s}{(r \!+\! qn)^s} =
\frac{q^s}{\varphi (q)} \! \sum_{\chi \!\!\!\! \mod q} \! \overline{\chi} (r) L(s,\chi).
\end{equation}
In addition, let $G(r, \overline{\chi})$ denote the (generalized) Gauss sum $G(r,\overline{\chi}) := \sum_{n=1}^q$ $\overline{\chi}(n)e^{2\pi irn/q}$ associated with a Dirichlet character $\overline{\chi}$. Then we have 
\begin{equation}\label{eq:k2}
{\rm{Li}} (s, r/q) = q^{-s} \sum_{n=1}^q e^{2\pi irn/q} \zeta (s,n/q) =
\frac{1}{\varphi (q)} \sum_{\chi \!\!\! \mod q} G(r,\overline{\chi}) L(s,\chi).
\end{equation}
Hence, from (\ref{eq:k1}) and (\ref{eq:k2}), it holds that
\begin{equation}\label{eq:qq}
Q(s,r/q) = \frac{1}{2 \varphi (q)} \sum_{\chi \!\!\! \mod q} 
\bigl(1+\chi(-1)\bigr) \bigl( \overline{\chi} (r) q^s + G(r, \overline{\chi}) \bigl) L(s,\chi) .
\end{equation}
Therefore, we have the following from the functional equation (\ref{eq:zfe1q}). 
\begin{corollary}\label{cor:2}
The function $Q(2s,r/q)$ satisfies -(1)-, -(2)-, and
\begin{flushleft}
{\bf{-(3b')-}} There exists a positive integer $q$ such that $q^{-2s} \phi(s)$ can be expanded in a Dirichlet series that converges somewhere, and the only pole allowed for $\phi(s/2)$ is $s=1$. 
\end{flushleft}
\end{corollary}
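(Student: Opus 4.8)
The plan is to verify the three conditions in turn for $\phi(s):=Q(2s,r/q)$, using only the analytic continuation of $Q(s,a)$ recalled after (\ref{eq:defQSA}), the functional equation of Theorem \ref{th:FE}, and the decomposition (\ref{eq:qq}); here $Q(s,r/q)$ is read through the symmetry $Q(s,a)=Q(s,1-a)$ visible in (\ref{eq:defQSA}), so that every coprime pair $0<r<q$ is covered. For -(1)-: since $Q(s,a)$ is holomorphic on ${\mathbb{C}}$ apart from a simple pole at $s=1$, the function $\phi(s)=Q(2s,r/q)$ is meromorphic with a single, simple pole at $s=1/2$, so $P(s):=2s-1$ makes $P(s)\phi(s)$ entire. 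By Hadamard's factorization theorem it then suffices that $P(s)\phi(s)$ have finite order, and this holds because, by (\ref{eq:defQSA}), $2(s-1)Q(s,r/q)$ is a finite linear combination of the order-one entire functions $(s-1)\zeta(s,r/q)$, $(s-1)\zeta(s,1-r/q)$, ${\rm{Li}}(s,r/q)$, ${\rm{Li}}(s,1-r/q)$ (polynomial growth in vertical strips by convexity, with the growth for $\sigma<0$ controlled through the Hurwitz-type functional equation, as for $(s-1)\zeta(s)$). Hence $(2s-1)\phi(s)$ is entire of order one, in particular of finite genus.

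For -(2)-: I would first recast (\ref{eq:zfe1q}) in symmetric form, exactly as one does for $\zeta(s)$. The Legendre duplication formula and the reflection formula for $\Gamma$ give the identity
\[
\frac{2\Gamma(s)}{(2\pi)^s}\cos\Bigl(\frac{\pi s}{2}\Bigr)=\frac{\pi^{-s/2}\Gamma(s/2)}{\pi^{-(1-s)/2}\Gamma\bigl((1-s)/2\bigr)},
\]
so Theorem \ref{th:FE} becomes $\pi^{-(1-s)/2}\Gamma\bigl((1-s)/2\bigr)Q(1-s,r/q)=\pi^{-s/2}\Gamma(s/2)Q(s,r/q)$, i.e.\ $\Xi(s)=\Xi(1-s)$ with $\Xi(s):=\pi^{-s/2}\Gamma(s/2)Q(s,r/q)$. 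Replacing $s$ by $2s$ and writing $R(s):=\pi^{-s}\Gamma(s)\phi(s)$, the left-hand side is $R(s)$ and the right-hand side equals $\pi^{-(1/2-s)}\Gamma(1/2-s)Q(1-2s,r/q)=R(1/2-s)$, since $Q(1-2s,r/q)=\phi(1/2-s)$. Thus $R(s)=R(1/2-s)$, which is precisely -(2)-.

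For -(3b')-: take $q$ to be the given conductor, apply (\ref{eq:qq}) with $s$ replaced by $2s$, and divide by $q^{2s}$, obtaining
\[
q^{-2s}Q(2s,r/q)=\frac{1}{2\varphi(q)}\sum_{\chi \bmod q}\bigl(1+\chi(-1)\bigr)\bigl(\overline{\chi}(r)+G(r,\overline{\chi})\,q^{-2s}\bigr)L(2s,\chi).
\]
Since $L(2s,\chi)=\sum_{n\ge1}\chi(n)(n^2)^{-s}$ and $q^{-2s}L(2s,\chi)=\sum_{n\ge1}\chi(n)\bigl((qn)^2\bigr)^{-s}$, each summand on the right is an ordinary Dirichlet series in $s$ supported on perfect squares and absolutely convergent for $\Re(s)>1/2$; a finite linear combination of such series is again an ordinary Dirichlet series convergent for $\Re(s)>1/2$, so $q^{-2s}\phi(s)$ has the required expansion. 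Finally $\phi(s/2)=Q(s,r/q)$ has its only pole at $s=1$, which is exactly the pole restriction demanded in -(3b')-; this completes the verification.

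The one step that is more than bookkeeping is the passage in -(2)- from the asymmetric Hamburger--Hecke--Knopp form (\ref{eq:zfe1q}) to the symmetric identity $R(s)=R(1/2-s)$; but this is the classical manipulation relating the two standard shapes of Riemann's functional equation, and it causes no real difficulty here since, by Theorem \ref{th:FE}, the quadrilateral zeta function satisfies precisely that equation. Everything else follows at once from the recalled analytic continuation of $Q(s,a)$ and from (\ref{eq:qq}).
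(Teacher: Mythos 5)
Your proposal is correct and follows essentially the same route as the paper, which derives (\ref{eq:qq}) from (\ref{eq:k1})--(\ref{eq:k2}) and then notes that the corollary is immediate from the functional equation (\ref{eq:zfe1q}); you simply make explicit the standard steps the paper leaves implicit (rewriting (\ref{eq:zfe1q}) in the symmetric $\Xi(s)=\Xi(1-s)$ form via duplication and reflection so that $R(s)=R(1/2-s)$ for $\phi(s)=Q(2s,r/q)$, reading off the ordinary Dirichlet series for $q^{-2s}\phi(s)$ from (\ref{eq:qq}), and checking the pole at $s=1/2$ and finite order for -(1)-). No gaps; the verification matches the paper's intended argument.
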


For $q \in {\mathbb{N}}$, put $H(s,q) := (q^s+q^{1-s})^{-1}$. Then, we can see that $H(s,q)=H(1-s,q)$, and $q^s H(s,q)$ is written as an ordinary Dirichlet series by
\[
q^s H(s,q) = \frac{q^s}{q^s+q^{1-s}} = \frac{1}{1+q^{1-2s}} = \sum_{k=0}^\infty \frac{(-q)^k}{q^{2ks}}, \qquad \sigma >1/2. 
\]
From (\ref{eq:qq}), the function $q^{-s}Q(s,r/q)$ can be expressed as an ordinary Dirichlet series. Therefore, we can see that
\[
H(s,q) Q(s,r/q) = q^s H(s,q) \cdot q^{-s}Q(s,r/q)
\] 
is also written by an ordinary Dirichlet series. Moreover, the function
\[
(1-q^{1-s})(1+q^{1-2s})H(s,q) Q(s,r/q)
\]
is entire. Hence, we have the following from Theorem \ref{th:FE}. 
\begin{corollary}\label{cor:3}
The function $H(2s,q)Q(2s,r/q)$ satisfies (2), (3a), and
\begin{flushleft}
{\bf{-(1')-}} The function $\phi(s)$ is meromorphic and $D(s)\phi(s)$ is an entire function of finite genus with a suitable {\sc{Dirichlet}} polynomial $D(s)$. 
\end{flushleft}
\end{corollary}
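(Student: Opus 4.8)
The plan is to read off all three properties by translating to the variable $2s$ the observations collected just above Corollary~\ref{cor:3}, the functional equation of Theorem~\ref{th:FE}, and the elementary identity
\[
(1+q^{1-2s})H(s,q)=\frac{1+q^{1-2s}}{q^{s}(1+q^{1-2s})}=q^{-s},
\]
which is immediate from $q^{s}+q^{1-s}=q^{s}(1+q^{1-2s})$. Throughout write $\phi(s):=H(2s,q)\,Q(2s,r/q)$ and $R(s):=\pi^{-s}\Gamma(s)\phi(s)$.

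\emph{Condition $(2)$.} First I would put $(\ref{eq:zfe1q})$ into the symmetric (completed) shape demanded by condition $(2)$. Since $(\ref{eq:zfe1q})$ is formally identical to Riemann's functional equation $(\ref{eq:RZfe})$, the Legendre duplication formula $\Gamma(s)=2^{s-1}\pi^{-1/2}\Gamma(s/2)\Gamma\bigl((s+1)/2\bigr)$ and the reflection formula $\Gamma\bigl((1-s)/2\bigr)\Gamma\bigl((1+s)/2\bigr)=\pi/\cos(\pi s/2)$ give, as an identity of meromorphic functions, $2(2\pi)^{-s}\Gamma(s)\cos(\pi s/2)=\pi^{1/2-s}\,\Gamma(s/2)/\Gamma\bigl((1-s)/2\bigr)$; substituting this into $(\ref{eq:zfe1q})$ yields
\[
\pi^{-s/2}\Gamma(s/2)\,Q(s,a)=\pi^{-(1-s)/2}\Gamma\bigl((1-s)/2\bigr)Q(1-s,a),\qquad 0<a\le1/2.
\]
Now replace $s$ by $2s$, use $H(2s,q)=H(1-2s,q)$ (immediate from $H(u,q)=H(1-u,q)$), and multiply through by $H(2s,q)$ to obtain $\pi^{-s}\Gamma(s)H(2s,q)Q(2s,r/q)=\pi^{-(1/2-s)}\Gamma(1/2-s)H(1-2s,q)Q(1-2s,r/q)$, that is, $R(s)=R(1/2-s)$.

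\emph{Condition $-(1')-$.} Replacing $s$ by $2s$ in the displayed identity gives $(1+q^{1-4s})H(2s,q)=q^{-2s}$, hence $(1+q^{1-4s})\phi(s)=q^{-2s}Q(2s,r/q)$. Since $\zeta(s,a)$ and $\zeta(s,1-a)$ each have a simple pole at $s=1$ with residue $1$ while ${\rm{Li}}(s,a)$ and ${\rm{Li}}(s,1-a)$ are entire for $0<a<1$, the function $Q(s,r/q)$ is holomorphic on ${\mathbb{C}}\setminus\{1\}$ with a simple pole at $s=1$; consequently $q^{-2s}Q(2s,r/q)$ is holomorphic except for a simple pole at $s=1/2$, where the entire function $1-q^{1-2s}$ has a simple zero. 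Therefore, with the Dirichlet polynomial
\[
D(s):=(1-q^{1-2s})(1+q^{1-4s})=1-q\,(q^{2})^{-s}+q\,(q^{4})^{-s}-q^{2}\,(q^{6})^{-s},
\]
the product $D(s)\phi(s)=(1-q^{1-2s})\,q^{-2s}Q(2s,r/q)$ is entire. It is of finite order, hence of finite genus: by $(\ref{eq:qq})$ the function $Q(s,r/q)$ is a finite linear combination of the functions $q^{s}L(s,\chi)$ and $L(s,\chi)$, so $(s-1)Q(s,r/q)$ is entire of order one, and $(1-q^{1-2s})q^{-2s}/(2s-1)$ is entire of order one, whence $D(s)\phi(s)$, being their product, is entire of order at most one. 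Thus $\phi(s)$ meets $-(1')-$.

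\emph{Condition $(3a)$.} It remains to exhibit Dirichlet-series expansions of $\phi(s)$ and $\phi(s/2)$. As recorded just above the statement of the corollary, $q^{-s}Q(s,r/q)$ is an ordinary Dirichlet series, absolutely convergent for $\sigma>1$ (from $(\ref{eq:qq})$, using $q^{-s}L(s,\chi)=\sum_{n\ge1}\chi(n)(qn)^{-s}$), and $q^{s}H(s,q)=\sum_{k\ge0}(-q)^{k}(q^{2k})^{-s}$ is an ordinary Dirichlet series for $\sigma>1/2$; forming the Dirichlet (Cauchy) product shows that $\phi(s/2)=H(s,q)Q(s,r/q)=\bigl(q^{s}H(s,q)\bigr)\bigl(q^{-s}Q(s,r/q)\bigr)$ is an ordinary Dirichlet series converging for $\sigma>1$. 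Running the same computation with $s$ replaced by $2s$, using $q^{2s}H(2s,q)=\sum_{k\ge0}(-q)^{k}(q^{4k})^{-s}$ and the absolute convergence of $q^{-2s}Q(2s,r/q)$ for $\sigma>1/2$, shows that $\phi(s)=H(2s,q)Q(2s,r/q)$ is an ordinary Dirichlet series converging for $\sigma>1/2$. Hence both $\phi(s)$ and $\phi(s/2)$ are expandable in Dirichlet series converging in a half-plane, and $(2)$, $(3a)$, $-(1')-$ all hold. The only step with any real content is the passage from the asymmetric equation $(\ref{eq:zfe1q})$ to the $\pi^{-s}\Gamma(s)$–symmetric form demanded by $(2)$; this is exactly the classical reformulation of Riemann's functional equation via the duplication and reflection formulas, and once it is in place the remainder is the algebra of the factor $H(s,q)$ together with routine Dirichlet-series bookkeeping.
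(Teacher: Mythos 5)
Your proposal is correct and follows essentially the same route as the paper: the paper's justification is precisely the paragraph preceding the corollary, which records $H(s,q)=H(1-s,q)$ for condition $(2)$, writes $H(s,q)Q(s,r/q)=\bigl(q^{s}H(s,q)\bigr)\bigl(q^{-s}Q(s,r/q)\bigr)$ as a product of ordinary Dirichlet series for $(3a)$, and notes that $(1-q^{1-s})(1+q^{1-2s})H(s,q)Q(s,r/q)$ is entire for $-(1')-$, exactly as you do after the substitution $s\mapsto 2s$. Your explicit passage from the asymmetric equation $(\ref{eq:zfe1q})$ to the $\pi^{-s}\Gamma(s)$--symmetric form via the duplication and reflection formulas is the step the paper leaves implicit in ``from Theorem \ref{th:FE},'' and it is carried out correctly.
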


Corollaries \ref{cor:1}, \ref{cor:2} and \ref{cor:3} can be regarded as analogues of Theorem C. Note that the functions appearing in Knopp's theorem have poles in the strip $1/2 -\sigma_0 \le \Re (s) \le \sigma_0$, where $\sigma_0 \ge 1/4$, from the condition -(3)-, but the zeta function $Q(2s,a)$ has only one pole at $s=1/2$ (see condition -(3b)- due to Hecke). Furthermore, the function $Q(2s,a)$ also fulfills the following splendid property by Theorem \ref{th:main}:
\begin{flushleft}
{\bf{-(0)-}} The function $\phi(s)$ has infinitely many zeros on the line $\sigma =1/4$. 
\end{flushleft}

Because Knopp did not explicitly provide solutions composed of zeta or $L$-functions that satisfy the conditions -(1)-, -(2)-, and -(3)-, we cannot see that the zeta or $L$-functions in his theorem fulfill any other noteworthy property. However, we show that the zeta function $Q(s,a)$ defined by (\ref{eq:defQSA}) satisfies the functional equation (H2) in Theorem \ref{th:FE}, or fulfills the condition -(2)- and some variations of -(1)- and -(3a)-, -(3b)-, or -(3)- in Corollaries \ref{cor:1}, \ref{cor:2}, and \ref{cor:3}. Furthermore, we prove that the zeta function $Q(s,a)$ has $\gg T$ zeros on the segment $(1/2, 1/2+iT)$ in Theorem \ref{th:main}. 

\section{Proofs}

\subsection{Functional equation and integral representation}
First, we prove Theorem \ref{th:FE}, namely, the functional equation (\ref{eq:zfe1q})
\begin{proof}[Proof of Theorem \ref{th:FE}]
For simplicity, we put
\[
Z(s,a) := \zeta (s,a) +  \zeta (s,1-a), \qquad P(s,a) := {\rm{Li}} (s,a) +  {\rm{Li}} (s,1-a) .
\]
Note that one has $2Q(s,a) = Z(s,a)+P(s,a)$. For $\sigma >1$, it is known that
\[
\zeta (1-s,a) =  \frac{2\Gamma (s)}{(2\pi)^s} \biggl\{ \cos \Bigl( \frac{\pi s}{2} \Bigr) 
\sum_{n=1}^\infty \frac{\cos 2\pi na}{n^s} + \sin \Bigl( \frac{\pi s}{2} \Bigr) 
\sum_{n=1}^\infty \frac{\sin 2\pi na}{n^s} \biggr\}
\]
(see for example \cite[Theorem 2.3.1]{LauGa}). Thus, we have
\[
Z (1-s,a) =  
\frac{4\Gamma (s)}{(2\pi)^s}  \cos \Bigl( \frac{\pi s}{2} \Bigr)\sum_{n=1}^\infty \frac{\cos 2\pi na}{n^s} 
= \frac{2\Gamma (s)}{(2\pi)^s}  \cos \Bigl( \frac{\pi s}{2} \Bigr) P(s,a) .
\]
In addition, for $\sigma <0$, it holds that
\[
{\rm{Li}} (1-s,a) = \frac{\Gamma (s)}{(2\pi )^s} \Bigl( e^{\pi is/2} \zeta (s,a) + e^{-\pi is/2} \zeta (s,1-a) \Bigr),
\qquad 0<a<1
\]
from \cite[Exercises 12.2 and 12.3]{Apo}. Hence, one has
\[
P(1-s,a) = \frac{2\Gamma (s)}{(2\pi )^s} \biggl( 
\cos \Bigl( \frac{\pi s}{2} \Bigr) \zeta (s,a) +\cos \Bigl( \frac{\pi s}{2} \Bigr)  \zeta (s,1-a) \biggr) 
=\frac{2\Gamma (s)}{(2\pi )^s} \cos \Bigl( \frac{\pi s}{2} \Bigr) Z(s,a) .
\]
We note that the functions $\zeta (s,a)$ and $Z(s,a)$ are regular for all $s \in {\mathbb{C}}$ except $s=1$ (see \cite[Chapter 12]{Apo}) and the functions ${\rm{Li}} (s,a)$ and $P(s,a)$ with $0<a<1$ are analytically continuable to an entire function (see \cite[Chapter 2.2]{LauGa}). Hence, the function $Q (s,a)$ is regular for all $s \in {\mathbb{C}}$ except $s=1$, where there is a simple pole with residue $1$. Therefore, by using the identity theorem and the functional equations of $Z(s,a)$ and $P(s,a)$ above, we have
\begin{equation*}
\begin{split}
&\quad \,\, 2Q(1-s,a) = Z(1-s,a)+P(1-s,a) \\ 
&=\frac{2\Gamma (s)}{(2\pi )^s} \cos \Bigl( \frac{\pi s}{2} \Bigr) \bigl( P(s,a) + Z(s,a) \bigr)
= \frac{2\Gamma (s)}{(2\pi )^s} \cos \Bigl( \frac{\pi s}{2} \Bigr) 2 Q(s,a)
\end{split}
\end{equation*}
which implies the functional equation of $Q(s,a)$.
\end{proof}

Next, we show the following integral representation of $\pi^{-s/2} \Gamma (s/2) Q(s,a)$. 
\begin{proposition}\label{lem:IR}
Define $G_{\! a}(u)$ by
\[
G_{\! a} (u) := \sum_{n \in \mathbb{Z}} \Bigl( \exp \bigl( -\pi u^2 (n+a)^2 \bigr) +
\exp \bigl( -\pi u^2 n^2 + i2\pi na \bigr) \Bigr).
\]
Then, for $0 < \Re (s) < 1$, we have
\[
\pi^{-s/2} \Gamma \Bigl( \frac{s}{2} \Bigr) Q(s,a) = \int_0^\infty u^{-s} \biggl( G_{\! a} (u) -1 -\frac{1}{u} \biggr) du.
\]
\end{proposition}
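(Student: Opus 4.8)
The plan is to run Riemann's classical argument for $\pi^{-s/2}\Gamma(s/2)\zeta(s)$, with the theta series replaced by $G_a$ and the Jacobi transformation replaced by the self-reciprocity $G_a(1/u)=u\,G_a(u)$. \emph{Step 1: transformation and growth of $G_a$.} Write $G_a=\theta_1+\theta_2$ with $\theta_1(u)=\sum_{n\in\mathbb Z}e^{-\pi u^2(n+a)^2}$ and $\theta_2(u)=\sum_{n\in\mathbb Z}e^{-\pi u^2n^2+2\pi ina}$. Poisson summation gives $\theta_1(u)=u^{-1}\sum_{m\in\mathbb Z}e^{2\pi ima-\pi m^2/u^2}$ and $\theta_2(u)=u^{-1}\sum_{m\in\mathbb Z}e^{-\pi(m-a)^2/u^2}$; substituting $u\mapsto 1/u$ and sending $m\mapsto-m$ in the second sum yields $G_a(1/u)=u\,G_a(u)$ for all $u>0$. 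The same expressions show $G_a(u)=1+O\bigl(e^{-\pi a^2u^2}\bigr)$ as $u\to\infty$, hence $G_a(u)=u^{-1}+O\bigl(u^{-1}e^{-\pi a^2/u^2}\bigr)$ as $u\to0^+$; thus $G_a(u)-1-u^{-1}\to-1$ as $u\to0^+$ and equals $-u^{-1}+O\bigl(e^{-\pi a^2u^2}\bigr)$ as $u\to\infty$, so the integral in the statement converges exactly for $0<\Re(s)<1$.

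\emph{Step 2: the representation for $\Re(s)>1$.} Using $\pi^{-s/2}\lambda^{-s}\Gamma(s/2)=2\int_0^\infty u^{s-1}e^{-\pi\lambda^2u^2}\,du$ for $\lambda>0$, applied termwise to the four Dirichlet series making up $2Q(s,a)$ and interchanging sum and integral (legitimate for $\Re(s)>1$), the pair $\zeta(s,a)+\zeta(s,1-a)$ contributes $\sum_{n\in\mathbb Z}e^{-\pi(n+a)^2u^2}=\theta_1(u)$ inside the integral, while $\mathrm{Li}(s,a)+\mathrm{Li}(s,1-a)$, after pairing $e^{2\pi ina}$ with $e^{2\pi in(1-a)}=e^{-2\pi ina}$, contributes $\sum_{n\in\mathbb Z}e^{-\pi n^2u^2+2\pi ina}-1=\theta_2(u)-1$. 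Therefore
\[
\pi^{-s/2}\Gamma(s/2)Q(s,a)=\int_0^\infty u^{s-1}\bigl(G_a(u)-1\bigr)\,du,\qquad \Re(s)>1.
\]

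\emph{Step 3: continuation and matching.} Split this at $u=1$; on $(0,1)$ insert $G_a(u)=u^{-1}G_a(1/u)$ and substitute $u\mapsto1/u$. Evaluating the elementary pieces $\int_1^\infty u^{s-1}\,du$, $\int_1^\infty u^{-s}\,du$, $\int_0^1 u^{s-1}\,du$ leads to the master formula
\[
\pi^{-s/2}\Gamma(s/2)Q(s,a)=\int_1^\infty\bigl(u^{s-1}+u^{-s}\bigr)\bigl(G_a(u)-1\bigr)\,du-\frac1s-\frac1{1-s},
\]
whose right-hand side is meromorphic on $\mathbb C$ (the integral is entire because $G_a-1$ decays exponentially at $\infty$), so the identity holds for all $s\neq0,1$ by analytic continuation, and its invariance under $s\mapsto1-s$ re-derives Theorem~\ref{th:FE}. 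Finally, for $0<\Re(s)<1$ I compute $\int_0^\infty u^{-s}(G_a(u)-1-1/u)\,du$ in the same manner: split at $u=1$, use $G_a(u)-1-1/u=u^{-1}\bigl(G_a(1/u)-1-u\bigr)$ on $(0,1)$, and substitute $u\mapsto1/u$; the elementary pieces assemble into exactly the master-formula right-hand side, and comparison with the displayed line of Step~2 proves the Proposition.

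\emph{Main obstacle.} The calculations themselves — Poisson summation and the bookkeeping of elementary integrals — are routine. The one point requiring care is that the representation valid for $\Re(s)>1$ and the one asserted for $0<\Re(s)<1$ have \emph{disjoint} strips of convergence, so the identity cannot be obtained by directly transforming one integral into the other; it has to be routed through the meromorphic master formula, and that is precisely where analytic continuation enters.
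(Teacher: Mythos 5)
Your proof is correct and follows essentially the same route as the paper: termwise Mellin integration for $\Re(s)>1$ to get $\int_0^\infty u^{s-1}(G_a(u)-1)\,du$, the theta self-reciprocity $G_a(1/u)=uG_a(u)$, a split at $u=1$, and analytic continuation into the strip $0<\Re(s)<1$. The only difference is bookkeeping: you pass through the symmetric completed formula $\int_1^\infty(u^{s-1}+u^{-s})(G_a(u)-1)\,du-\tfrac1s-\tfrac1{1-s}$ and then evaluate the strip integral against it, whereas the paper substitutes $u\to v^{-1}$ in the whole integral and reabsorbs the resulting $\tfrac1{s-1}$ as $-\int_0^1 v^{-s}\,dv$; both are the same classical Riemann argument.
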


\begin{proof}
For $\Re (s) > 1$, we have
\begin{equation*}
\begin{split}
&\quad \,\, 2 \int_0^\infty u^{s-1} \sum_{n \in \mathbb{Z}} \exp \bigl( -\pi u^2 (n+a)^2 \bigr) du  \\ &=
2\sum_{n=0}^\infty \int_0^\infty u^{s-1} e^{ -\pi u^2 (n+a)^ 2 } du +
2\sum_{n=0}^\infty \int_0^\infty u^{s-1} e^{-\pi u^2 (n+1-a)^ 2 } du  
\end{split}
\end{equation*}
The first infinite series can be written as
\[
\sum_{n=0}^\infty \int_0^\infty e^{-v} \biggl( \frac{v/\pi}{(n+a)^ 2 } \biggr)^{s/2-1} \!\! \frac{dv /\pi}{(n+a)^2} \\
=  \pi^{-s/2} \Gamma \Bigl( \frac{s}{2} \Bigr) \sum_{n=0}^\infty (n+a)^{-s}.
\]
Hence, we obtain 
\[
2 \int_0^\infty u^{s-1} \sum_{n \in \mathbb{Z}} \exp \bigl( -\pi u^2 (n+a)^2 \bigr) du
= \pi^{-s/2} \Gamma \Bigl( \frac{s}{2} \Bigr) \bigl( \zeta (s,a) +  \zeta (s,1-a) \bigl). 
\]
Similarly, when $\Re (s) > 1$, one has
\begin{equation*}
\begin{split}
&\quad \,\, 2 \int_0^\infty u^{s-1} \sum_{0 \ne n \in \mathbb{Z}} \exp \bigl( -\pi u^2 n^2 + i2\pi na \bigr) du  \\ &=
2\sum_{n=0}^\infty \int_0^\infty u^{s-1} e^{2\pi i n a -\pi u^2 n^2}  du +
2\sum_{n=0}^\infty \int_0^\infty u^{s-1} e^{2\pi i n (1-a) -\pi u^2 n^2 } .
\end{split}
\end{equation*}
The first infinite series can be expressed as
\[
\sum_{n=0}^\infty \int_0^\infty e^{2\pi i n a} e^{-v} \biggl( \frac{v/\pi}{n^2 } \biggr)^{s/2-1} \frac{dv /\pi}{n^2} =
\pi^{-s/2} \Gamma \Bigl( \frac{s}{2} \Bigr) \sum_{n=0}^\infty \frac{e^{2\pi i n a}}{n^s} .
\]
Thus, it holds that
\[
2 \int_0^\infty \!\! u^{s-1} \!\! \sum_{0 \ne n \in \mathbb{Z}} \!\! \exp \bigl( -\pi u^2 n^2 + i2\pi na \bigr) du 
= \pi^{-s/2} \Gamma \Bigl( \frac{s}{2} \Bigr) \bigl( {\rm{Li}} (s,a) +  {\rm{Li}} (s,1-a) \bigl). 
\]
Therefore, when $\Re (s) >1$, we have
\[
\pi^{-s/2} \Gamma \Bigl( \frac{s}{2} \Bigr) Q(s,a) = \int_0^\infty u^{s-1} \bigl(  G_{\! a} (u) -1 \bigr) du .
\]
For $a,u>0$, it is well-known that (see \cite[p.~13, (6)]{KV})
\begin{equation*}
\begin{split}
&\sum_{n \in \mathbb{Z}} \exp \bigl( -\pi u^2 (n+a)^2 \bigr) =
\frac{1}{u} \sum_{n \in \mathbb{Z}} \exp \bigl( -\pi u^{-2} n^2 + i2\pi na \bigr), \\
&\sum_{n \in \mathbb{Z}} \exp \bigl( -\pi u^2 n^2 + i2\pi na \bigr) =
\frac{1}{u} \sum_{n \in \mathbb{Z}} \exp \bigl( -\pi u^{-2} (n+a)^2 \bigr) .
\end{split}
\end{equation*}
Hence, we easily obtain
\begin{equation}\label{eq:G}
G_{\! a} (u) = u^{-1} G_{\! a} (u^{-1}), \qquad u >0. 
\end{equation}
By using the equation above and changing the variable $u \to v^{-1}$, we have
\[
\pi^{-s/2} \Gamma \Bigl( \frac{s}{2} \Bigr) Q(s,a) = \int_0^\infty v^{-s} \biggl( G_{\! a}(v) -\frac{1}{v} \biggr) dv 
\]
when $\Re (s) >1$. Hence, we have
\begin{equation}
\begin{split}\label{eq:2int}
&\pi^{-s/2} \Gamma \Bigl( \frac{s}{2} \Bigr) Q(s,a) = 
\int_0^1 \!\! v^{-s} \biggl( G_{\! a} (v) -\frac{1}{v} \biggr) dv + \int_1^\infty \!\!\! v^{-s} \biggl( G_{\! a}(v) -\frac{1}{v} \biggr) dv \\
&= \int_0^1 v^{-s} \biggl( G_{\! a} (v) -\frac{1}{v} \biggr) dv + 
\int_1^\infty v^{-s} \biggl( G_{\! a} (v) -1 -\frac{1}{v} \biggr) dv  + \frac{1}{s-1} .
\end{split}
\end{equation}
Let $a_\star := \min \{a,1-a \}$. From the definition of $G_{\! a} (u) $ and (\ref{eq:G}), it holds that
\[
G_{\! a} (u) = \frac{1}{u} \Bigl( 1 + O \bigl( \exp(- a_\star^2 u^{-2}) \bigl) \Bigr), \qquad u \to 0.
\]
Let $\Re (s) > 0$. By the definition of $G_{\! a} (u) $ and the estimation above, we have
\begin{equation*}
\begin{split}
&\int_0^1 v^{-s} \biggl( G_{\! a} (v) - \frac{1}{v} \biggr) dv \ll \int_0^1 v^{-\sigma-1} \exp \bigl(- a_\star^2 v^{-2} \bigl) dv
< \infty, \\
&\int_1^\infty v^{-s} \biggl( G_{\! a}(v) -1 -\frac{1}{v} \biggr) dv \ll 
\int_1^\infty v^{-\sigma-1} dv + \int_1^\infty \biggl( G_{\! a} (v) - 1 \biggr) dv \\ & \ll 
\int_1^\infty \sum_{n=1}^\infty \exp \bigl( -\pi v n^2 \bigr) dv < \infty.
\end{split}
\end{equation*}
Hence, both integrals in (\ref{eq:2int}) converge when $\Re (s) > 0$. Clearly, one has 
\[
\frac{1}{s-1} = - \int_0^1 v^{-s} dv
\] 
for $\Re(s) <1$. Therefore, we obtain the integral representation in Proposition \ref{lem:IR}
\end{proof}

\subsection{Lemmas}
We contrast the integrals $J_a (t)$ and $I_a (t)$ given by
\begin{equation*}
\begin{split}
J_a (t)  &:= \frac{1}{2\pi} \int_{t-k}^{t+k} \biggl|
\pi^{-1/4-iu/4} \Gamma \Bigl( \frac{1 + 2iu}{4} \Bigr) Q(1/2 + iu,a) e^{\pi u/4} e^{-u\delta/2} \biggr| du \\
I_a (t) &:=  \frac{e^{i \pi /8} e^{-i \delta/4}}{2\pi} \int_{t-k}^{t+k} 
\pi^{-1/4-iu/4} \Gamma \Bigl( \frac{1+2iu}{4} \Bigr) Q(1/2+iu,a) e^{\pi u/4} e^{-u\delta/2} du 
\end{split}
\end{equation*}
to show Theorem \ref{th:main}. This idea is parallel to the proof in \cite[Chapter 11.2]{ERZ} or \cite[Chapter 10.7]{Tit}. Note that $|J_a (t)| > |I_a (t)|$ if the interval of integration of $I_a (t)$ contains roots of $Q(s,a)=0$ on $\Re(s) =1/2$ and $|J_a (t)| = |I_a (t)|$ otherwise.  The key to the proof of Theorem \ref{th:main} is to estimate the total length of all the integral intervals satisfying $|J_a (t)| > |I_a (t)|$. For this purpose, we present some lemmas. 
\begin{lemma}\label{lem:Inlow}
Let $a \ne 1/4$, $t$ and $k$ be sufficiently large. Then, we have
\begin{equation}
\begin{split}
&\int_{t-k}^{t+k} \bigl| Q(1/2+iv, a) \bigr| dv \ge 2k |\cos (2\pi a)| - C_1 (a) - \frac{C_2 (a) k^2}{(t-k)^{1/2}} \\ &
- \Biggl| \sum_{a_*=a, 1-a} \sum_{2 \le n < t/a_\star} \!\!
\biggl( \frac{\sin (k \log (n + a_*))}{(n \!+\! a_*)^{1/2+it}\log(n \!+\! a_*)} + 
\frac{e^{2\pi i a_* n} \sin (k \log n)}{n^{1/2+it}\log n} \biggr) \Biggr|,
\end{split}
\end{equation}
where $C_1(a)$ and $C_2(a)$ are some positive constants depend on $a$. 
\end{lemma}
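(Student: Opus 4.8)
The plan is to bound the integral from below by the modulus of the integral of $Q$ itself, $\int_{t-k}^{t+k}|Q(1/2+iv,a)|\,dv \ge \bigl|\int_{t-k}^{t+k}Q(1/2+iv,a)\,dv\bigr|$, and then to evaluate the right-hand side termwise after writing $2Q(s,a)=\zeta(s,a)+\zeta(s,1-a)+{\rm{Li}}(s,a)+{\rm{Li}}(s,1-a)$ and inserting a Dirichlet series for each of the four constituents. The elementary identity $\int_{t-k}^{t+k}x^{-iv}\,dv = 2\sin(k\log x)\,x^{-it}/\log x$ for $x>0$, $x\ne1$ (with limiting value $2k$ at $x=1$) produces exactly the shape of the terms in the statement, and the hypothesis $a\ne 1/4$ is used only to know that the eventual main term $2k\cos(2\pi a)$ does not vanish.

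For the two periodic pieces I would use that the Dirichlet series of ${\rm{Li}}(s,a_*)$, $a_*\in\{a,1-a\}$, converges uniformly on compact subsets of $\sigma>0$ (recalled in Section~1.1), which justifies the interchange $\int_{t-k}^{t+k}{\rm{Li}}(1/2+iv,a_*)\,dv = \sum_{n\ge1}e^{2\pi ina_*}\int_{t-k}^{t+k}n^{-1/2-iv}\,dv$. The $n=1$ term equals $2k\,e^{2\pi ia_*}$; summing over $a_*\in\{a,1-a\}$ and halving gives the main term $2k\cos(2\pi a)$. Truncating the remaining series at $n<t/a_\star$ leaves the tail $\sum_{n\ge t/a_\star}2e^{2\pi ina_*}\sin(k\log n)\,n^{-1/2-it}/\log n$, which, via $2\sin(k\log n)/\log n = \int_{-k}^{k}n^{iu}\,du$, equals $\int_{-k}^{k}T(t-u)\,du$ with $T(w)=\sum_{n\ge t/a_\star}e^{2\pi ina_*}n^{-1/2-iw}$. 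Because $t/a_\star$ lies past the stationary point of the phase $2\pi a_\star x-w\log x$ for all $w\in[t-k,t+k]$, partial summation together with a first-derivative estimate for that exponential sum gives $T(w)\ll_a(t/a_\star)^{-1/2}$ uniformly in $w$, so the tail is $\ll_a k\,t^{-1/2}\le k^2(t-k)^{-1/2}$.

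For the two Hurwitz pieces, whose Dirichlet series does not converge on $\sigma=1/2$, I would instead use the Euler--Maclaurin truncation $\zeta(1/2+iv,a_*)=\sum_{0\le n\le N}(n+a_*)^{-1/2-iv}+\frac{(N+a_*)^{1/2-iv}}{iv-1/2}+O\bigl((N+a_*)^{-1/2}+|v|(N+a_*)^{-3/2}\bigr)$ with $N=\lceil t/a_\star\rceil$, which for $t$ large relative to $k$ exceeds $v$ throughout $[t-k,t+k]$, so its secondary and remainder terms are $O_a(t^{-1/2})$ uniformly in $v$ and contribute $\ll_a k\,t^{-1/2}$ after integration. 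Integrating the main sum termwise and isolating $n=0$ and $n=1$ --- whose contributions have modulus at most $2/(a_*^{1/2}|\log a_*|)$ and $2/((1+a_*)^{1/2}\log(1+a_*))$, hence $O_a(1)$ --- leaves exactly the $(n+a_*)$-sum of the statement. Collecting the four pieces gives $\int_{t-k}^{t+k}Q(1/2+iv,a)\,dv = 2k\cos(2\pi a)+S+O_a(1)+O_a\bigl(k^2(t-k)^{-1/2}\bigr)$, where $S$ is the double sum in the statement; taking moduli and combining with the first display yields the claimed inequality for suitable $C_1(a)$ and $C_2(a)$.

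The step I expect to be the main obstacle is the periodic-part truncation: one must estimate $T(w)=\sum_{n\ge t/a_\star}e^{2\pi ina_*}n^{-1/2-iw}$ uniformly for $w$ in an interval of length $2k$ about $t$, and this is what forces the summation range to begin at a multiple of $t/a_\star$ (so that the cutoff clears the stationary point of the oscillating phase $2\pi a_\star x-w\log x$ regardless of $w$), the relevant bound being a routine first-derivative (Kusmin--Landau) estimate there. Everything else --- the interchange of sum and integral from uniform convergence, the Euler--Maclaurin truncation of the Hurwitz part, extraction of the $n=0,1$ terms, and the bookkeeping of error terms --- is standard, the only care being to track the $a$-dependence through $a_\star$, $\log a$, $\log(1-a)$, and $\cos(2\pi a)$.
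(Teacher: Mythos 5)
Your proposal is correct and follows the same overall strategy as the paper: bound $\int_{t-k}^{t+k}|Q|$ below by the integral of $Q$ itself, replace each of the four constituents $\zeta(s,a_*)$, ${\rm Li}(s,a_*)$ on $\sigma=1/2$ by a Dirichlet polynomial of length about $t/a_\star$ with an $O_a(t^{-1/2})$-type error, integrate termwise via $\int_{t-k}^{t+k}x^{-iv}\,dv=2\sin(k\log x)x^{-it}/\log x$, and extract the main term $2k\cos(2\pi a)$ from the $n=1$ terms of the periodic parts, the constant $C_1(a)$ from the $n=0,1$ Hurwitz terms, and the $C_2(a)k^2(t-k)^{-1/2}$ error from the truncation. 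The only substantive difference is how the truncation is justified: the paper simply cites the approximate functional equations of Laurin\v{c}ikas--Garunk\v{s}tis (Theorems 3.1.2 and 3.1.3), whose natural cutoff depends on $v$, and then converts to the fixed cutoff $t/a_\star$ by the correction sums $E_\zeta$, $E_{\rm Li}$ (at most $2k/a_\star$ terms, each $\ll_a (t-k)^{-1/2}$); you instead work at the fixed cutoff from the start and re-derive the needed approximations yourself, via Euler--Maclaurin for the Hurwitz part and uniform convergence plus a Kusmin--Landau/partial-summation tail bound for the periodic part (your phase derivative $a_*-w/(2\pi x)$ indeed stays in a compact subinterval of $(0,1)$ depending on $a$ for $x\ge t/a_\star$, $w\in[t-k,t+k]$, $t\gg_a k$, so the tail is $\ll_a t^{-1/2}$ as claimed; note the phase should read $2\pi a_*x-w\log x$, not $a_\star$). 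You also use $|\int Q|$ where the paper uses $\pm\int\Re Q$ according to the sign of $\cos(2\pi a)$; both handle the absolute value in the statement equally well, and both routes land on errors of the same size, yours being self-contained at the cost of redoing standard approximate-functional-equation estimates that the paper outsources.
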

\begin{proof}
For $a_*=a$ or $1-a$, the following approximate functional equations are shown (see \cite[Theorems 3.1.3 and 3.1.2]{LauGa}). Suppose $2\pi \le |\tau| \le \pi x$. Then, one has 
\[
\zeta(1/2+i\tau,a_*) = \sum_{n=0}^x \frac{1}{(n+a_*)^{1/2+i\tau}} + O \bigl( x^{-1/2} \bigr) .
\]
Moreover, for $|t| \le \pi a_* x$, it holds that
\[
{\rm{Li}} (1/2+i\tau,a_*) = \sum_{n=1}^x \frac{e^{2\pi i a_* n}}{n^{1/2+i\tau}} + O \bigl( x^{-1/2} \bigr) .
\]
Let $a_\star := \min \{a,1-a\}$ again. Then, for $t-k \le v \le t+k$, we have
\begin{equation*}
\begin{split}
&\zeta(1/2+iv,a_*) = \sum_{0 \le n < t/a_\star} \frac{1}{(n+a_*)^{1/2+iv}} + E_\zeta (v,a_*) + O \bigl( v^{-1/2} \bigr), \\
&{\rm{Li}} (1/2+iv,a_*) = \sum_{1 \le n < t/a_\star} \frac{e^{2\pi i a_* n}}{n^{1/2+iv}} + E_{\rm{Li}}(v,a_*)
+ O \bigl( v^{-1/2} \bigr),
\end{split}
\end{equation*}
where $E_\zeta (v,a_*)$ is defined by
\[
E_\zeta (v,a_*) :=
\begin{cases}
\,\, \sum_{t/a_\star \le n < v/a_\star} (n+a_*)^{-1/2-iv} &  t \le v, \\
-\sum_{v/a_\star \le n < t/a_\star} (n+a_*)^{-1/2-iv} & t > v.
\end{cases}
\]
The function $E_{\rm{Li}} (v,a_*)$ is defined similarly. Because $E_\zeta (v,a_*)$ consists of at most $2k/a_\star$ terms each of modulus at most $(n+a_*)^{-1/2} \le a_\star^{-1/2} (t-k)^{-1/2}$, we obtain that for $t-k \le v \le t+k$, 
\[
\Re \Biggl( \sum_{0 \le n < t/a_\star} \frac{1}{(n+a_*)^{1/2+iv}} - \zeta(1/2+iv,a_*) \Biggr) \le 
\frac{2k a_\star^{-3/2}}{(t-k)^{1/2}} + O \bigl( v^{-1/2} \bigr) \le \frac{C_2'(a_*) k}{(t-k)^{1/2}}
\]
for some positive constant $C_2'(a_*)$. Similarly, for some constant $C_2''(a_*)>0$, it holds that
\[
\Re \Biggl( \sum_{1 \le n < t/a_\star} \! \frac{e^{2\pi i a_* n}}{n^{1/2+iv}} - {\rm{Li}} (1/2+iv,a_*) \Biggr) \le 
\frac{2k a_\star^{-3/2}}{(t-k)^{1/2}} + O \bigl( v^{-1/2} \bigr) \le \frac{C_2''(a_*)k}{(t-k)^{1/2}}.
\]
Assume $0<a< 1/4$ and put $M(k,a) := 4k \cos (2\pi a) - 2 C_1 (a)$ and
\[
M (k,t,a) := 4k \cos (2\pi a) - 2 C_1 (a) - \frac{2C_2 (a) k^2}{(t-k)^{1/2}},
\]
where $0 < C_2'(a)  + C_2''(a) + C_2'(1-a)  + C_2''(1-a) \le C_2 (a)$. For simplicity, we put $\sum_{*,\star} : = \sum_{a_*=a, 1-a} \sum_{2 \le n < t/a_\star}$. Then, we have
\begin{equation*}
\begin{split}
&2 \int_{t-k}^{t+k} \bigl| Q(1/2+iv, a) \bigr| dv \ge 2 \int_{t-k}^{t+k} \Re \bigl( Q(1/2+iv, a) \bigr) dv \\ \ge
& \, M (k,a) + \int_{t-k}^{t+k}  \Biggl( \Re \sum_{*,\star} 
\biggl( \frac{1}{(n+a_*)^{1/2+iv}} + \frac{e^{2\pi i a_* n}}{n^{1/2+iv}} \biggr) - \frac{C_2 (a) k}{(t-k)^{1/2}} \Biggr) dv \\
 = & \, M (k,t,a) + \Re \sum_{\smash{*,\star}} 
\biggl( \frac{2 \sin (k \log (n+a_*))}{(n+a_*)^{1/2+it}\log(n+a_*)} + 
\frac{2e^{2\pi i a_* n} \sin (k \log n)}{n^{1/2+it}\log n} \biggr) 
\end{split}
\end{equation*}
which implies Lemma \ref{lem:Inlow} with $0<a< 1/4$. When $1/4<a< 1/2$, based on the inequality
\[
\int_{t-k}^{t+k} \bigl| Q(1/2+iv, a) \bigr| dv \ge - \int_{t-k}^{t+k} \Re \bigl( Q(1/2+iv, a) \bigr) dv,
\]
we obtain Lemma \ref{lem:Inlow} with $1/4 < a < 1/2$.
\end{proof}

\begin{lemma}\label{lem:C-S}
Put $a_\star := \min \{a,1-a\}$ and $a_* = a$ or $1-a$. Let $A \le t \le B$ with $B \ge A \ge 2/a_\star$. Then, it holds that
\begin{equation*}
\begin{split}
& \int_A^B \Biggl| \sum_{2 \le n <t/a_\star} \frac{\sin (k \log (n+a_*))}{(n+a_*)^{1/2+it} \log(n+a_*)} \Biggr| dt \le C_3 (a_*) B, \\
&\int_A^B \Biggl| \sum_{2 \le n <t/a_\star} \frac{e^{2\pi ina_*} \sin (k \log n)}{n^{1/2+it} \log n} \Biggr| dt \le C_4 (a_*) B,
\end{split}
\end{equation*}
for some positive constants $C_3 (a_*)$ and $C_4 (a_*)$. 
\end{lemma}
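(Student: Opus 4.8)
The plan is to treat each sum as a generalized Dirichlet polynomial in $t$ of $t$-dependent length, and to bound its $L^1$-norm on $[A,B]$ by Cauchy--Schwarz together with a second-moment estimate; I will write out the argument for the first sum, the second being handled identically. Put $\lambda_n := \log(n+a_*)$, $b_n := \sin(k\log(n+a_*))\,(n+a_*)^{-1/2}(\log(n+a_*))^{-1}$ and $N := \lceil B/a_\star\rceil$, so that for $A \le t \le B$ the first sum equals $S(t) := \sum_{2 \le n < t/a_\star} b_n e^{-it\lambda_n}$, a polynomial supported on $2 \le n \le N$. Two elementary facts about the coefficients will be used throughout: $|b_n| \le (\log 2)^{-1}(n+a_*)^{-1/2}$ for $n \ge 2$, and $\sum_{n=2}^{\infty}|b_n|^2 \le \sum_{n=2}^{\infty}(n+a_*)^{-1}(\log(n+a_*))^{-2} < \infty$ (comparison with $\int_2^{\infty}x^{-1}(\log x)^{-2}\,dx$). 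I also record that the frequencies are well spaced: $\lambda_{n+1}-\lambda_n = \log\bigl(1 + (n+a_*)^{-1}\bigr)$, which is $\asymp n^{-1}$.

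First I would apply Cauchy--Schwarz, $\int_A^B |S(t)|\,dt \le (B-A)^{1/2}\bigl(\int_A^B |S(t)|^2\,dt\bigr)^{1/2} \le B^{1/2}\bigl(\int_A^B |S(t)|^2\,dt\bigr)^{1/2}$, reducing matters to the bound $\int_A^B |S(t)|^2\,dt = O(B)$, with implied constant depending only on $a_*$. Then I would expand the square and interchange sum and integral: $\int_A^B |S(t)|^2\,dt = \sum_{2 \le m,n \le N} b_m\overline{b_n}\int_{\max\{A,\,a_\star m,\,a_\star n\}}^{B} e^{-it(\lambda_m-\lambda_n)}\,dt$, a pair $(m,n)$ contributing only when $a_\star\max\{m,n\} < B$. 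The diagonal $m=n$ contributes $\sum_{n}|b_n|^2\bigl(B - \max\{A,a_\star n\}\bigr) \le (B-A)\sum_{n=2}^{\infty}|b_n|^2 = O(B)$. For $m \ne n$ the inner integral has modulus at most $2/|\lambda_m-\lambda_n|$, and for $m > n \ge 2$ one has $\lambda_m-\lambda_n = \log\frac{m+a_*}{n+a_*} \ge \log\frac{m+1}{n+1} \ge \frac{m-n}{m+1}$ (using $0 \le a_* \le 1$ and $\log(1+x) \ge x/(1+x)$); hence the off-diagonal part is $\ll \sum_{2 \le n < m \le N}\frac{(m+1)|b_m|\,|b_n|}{m-n} \ll \sum_{2 \le n < m \le N}\frac{1}{m-n}\cdot\frac{m^{1/2}}{n^{1/2}\,\log m\,\log n}$, with absolute implied constants.

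To finish I would estimate this last double sum by splitting the inner sum over $n$ at $m/2$: for $n \le m/2$ one has $m-n \ge m/2$ and $\sum_{n \le m/2} n^{-1/2}(\log n)^{-1} \ll m^{1/2}/\log m$, whereas for $m/2 < n < m$ one has $n^{-1/2}(\log n)^{-1} \asymp m^{-1/2}/\log m$ and $\sum_{m/2 < n < m}(m-n)^{-1} \ll \log m$; in either case the inner sum is $\ll m^{-1/2}$, so the double sum is $\ll \sum_{m \le N}(\log m)^{-1} \ll N/\log N$, which is $O(B)$ since $N \asymp B/a_\star$. Combining the diagonal and off-diagonal bounds gives $\int_A^B |S(t)|^2\,dt = O(B)$, hence the first inequality with a suitable constant $C_3(a_*)$. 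For the second inequality I would repeat the computation verbatim with $b_n$ replaced by $c_n := e^{2\pi i n a_*}\sin(k\log n)\,n^{-1/2}(\log n)^{-1}$ and $\lambda_n$ by $\log n$: the unimodular factor $e^{2\pi i n a_*}$ leaves $|c_n|$ unchanged, one still has $\sum|c_n|^2 < \infty$, and the spacing $\log(n+1)-\log n \asymp n^{-1}$ is the same. I expect the off-diagonal estimate to be the main obstacle; it works out precisely because the logarithmic spacing $\asymp n^{-1}$ makes the off-diagonal contribution comparable to $\sum_{n \le N} n|b_n|^2 \ll \sum_{n \le N}(\log n)^{-2} \ll N(\log N)^{-2} = o(N)$, so that both it and the diagonal ($\ll B\sum|b_n|^2$) come out $O(B)$. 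One could instead invoke the Montgomery--Vaughan mean value theorem for generalized Dirichlet polynomials, but the $t$-dependent truncation $n < t/a_\star$ makes the direct expansion above more transparent.
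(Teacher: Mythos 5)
Your proposal is correct and follows essentially the same route as the paper: Cauchy--Schwarz reduces the $L^1$ bound to the mean square, which you expand into a diagonal part controlled by $\sum_n n^{-1}(\log n)^{-2}$ and an off-diagonal part whose inner integrals over $[\max\{A,a_\star m,a_\star n\},B\,]$ are bounded by $2/|\lambda_m-\lambda_n|$ --- exactly the paper's argument, except that where the paper cites Edwards (p.~236) for the off-diagonal double sum being $\ll B$, you verify it elementarily via $\lambda_m-\lambda_n\ge (m-n)/(m+1)$ and the split at $n=m/2$. One small slip: that split gives an inner sum over $n$ that is $\ll (\log m)^{-1}$, not $\ll m^{-1/2}$, but this is precisely the bound your next step uses ($\sum_{m\le N}(\log m)^{-1}\ll N/\log N\ll B$), so the conclusion stands.
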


\begin{proof}
First, we estimate the integral
\begin{equation*}
\begin{split}
&\int_A^B \Biggl| \sum_{2 \le n < t/a_\star} \frac{\sin (k \log (n+a_*))}{(n+a_*)^{1/2+it} \log(n+a_*)} \Biggr|^2 dt = \\
&\int_A^B \sum_{2 \le n,m < t/a_\star} 
\frac{\sin (k \log (n+a_*)) \sin (k \log (m+a_*))}{(n+a_*)^{1/2} (m+a_*)^{1/2} \log(n+a_*) \log(m+a_*)} 
\Bigl( \frac{m+a_*}{n+a_*} \Bigr)^{it} dt.
\end{split}
\end{equation*}
For the terms with $n = m$, we have
\[
\int_A^B \sum_{2 \le n < t/a_\star} \frac{\sin^2 (k \log (n+a_*))}{(n+a_*) (\log(n+a_*))^2} dt 
\ll \int_A^B \sum_{2 \le n < t/a_\star} \frac{1}{n (\log n) ^2} dt.
\]
It should be noted that the infinite series $\sum_{n=2}^\infty n^{-1} (\log n)^{-2}$ converges. Each of the terms with $n \ne m$ is of the form
\[
\frac{\sin (k \log (n+a_*))\sin (k \log (m+a_*))}{(n+a_*)^{1/2} (m+a_*)^{1/2} \log(n+a_*) \log(m+a_*)} 
\int_b^B \Bigl( \frac{m+a_*}{n+a_*} \Bigr)^{it} dt,
\]
where $b: = \max (A, a_\star m, a_\star n)$. Thus, regardless of the value of $b$, its absolute value is bounded above by
\[
\frac{2(n+a_*)^{-1/2} (m+a_*)^{-1/2} }{\log(n+a_*) \log(m+a_*) |\log((n+a_*)/(m+a_*))|} \ll
\frac{n^{-1/2} m^{-1/2} }{\log n \log m |\log(n/m)|}.
\]
It is shown in \cite[p.~236]{ERZ} that
\begin{equation}\label{eq:iq236}
\sum_{2 \le n \ne m \le B} \frac{n^{-1/2} m^{-1/2} }{\log n \log m |\log(n/m)|} \ll B.
\end{equation}
Therefore, it holds that 
\[
\int_A^B \Biggl| \sum_{2 \le n < t/a_\star} \frac{\sin (k \log (n+a_*))}{(n+a_*)^{1/2+it} \log(n+a_*)} \Biggr|^2  dt \ll B.
\]
From the inequality above and the the Cauchy-Schwarz inequality, we have
\[
\int_A^B \Biggl| \sum_{2 \le n < t/a_\star} \frac{\sin (k \log (n+a_*))}{(n+a_*)^{1/2+it} \log(n+a_*)} \Biggr| dt 
\ll (B-A)^{1/2} B^{1/2} \ll B,
\]
which implies the first inequality of Lemma \ref{lem:C-S}. 

Second, consider the integral
\begin{equation*}
\begin{split}
&\int_A^B \Biggl| \sum_{2 \le n < t/a_\star} \frac{e^{2\pi ina_*} \sin (k \log n)}{n^{1/2+it} \log n} \Biggr|^2 dt = \\ &\int_A^B 
\sum_{2 \le n,m  < t/a_\star} \frac{e^{2\pi ina_*} e^{-2\pi ima_*} \sin (k \log n) \sin (k \log m)}{n^{1/2} m^{1/2} \log n \log m} 
\Bigl( \frac{m}{n} \Bigr)^{it} dt.
\end{split}
\end{equation*}
Obviously, for the terms with $n = m$, we have
\[
\int_A^B \sum_{2 \le n < t/a_\star} \frac{e^{2\pi ina_*} e^{-2\pi ina_*} \sin^2 (k \log n)}{n (\log n)^2} dt \le
\int_A^B \sum_{2 \le n < t/a_\star} \frac{1}{n (\log n) ^2} dt.
\]
For each of the terms with $n \ne m$, we have
\[
\biggl| \int_b^B \frac{e^{2\pi ina_*} e^{-2\pi ima_*} \sin (k \log n) \sin (k \log m)}{n^{1/2} m^{1/2} \log n \log m} \Bigl( \frac{m}{n} \Bigr)^{it} dt \biggr| \ll \frac{n^{-1/2} m^{-1/2} }{\log n \log m |\log(n/m)|}.
\]
Thus, we have the second inequality of Lemma \ref{lem:C-S} from (\ref{eq:iq236}) and the Cauchy-Schwarz inequality. 
\end{proof}

\subsection{Proof of the existence of zeros on the critical line}
Let $k$ be a positive real number, $x$ be a complex number with $|x|=1$ and $|\Im (\log x)| \le \pi /4$, and put
\begin{equation}\label{eq:IIsa(1)}
I_{x,k} (s,a) := \frac{1}{2\pi i} \int_{s-ik}^{s+ik} \pi^{-v/2} \Gamma \Bigl( \frac{v}{2} \Bigr) Q(v,a) x^{v-1}dv.
\end{equation}
Then, by Proposition \ref{lem:IR}, the function $I_{x,k} (s,a)$ can be expressed as
\begin{equation*}
\begin{split}
&\frac{1}{2\pi i} \! \int_{s-ik}^{s+ik} \! \int_0^\infty \Bigl( \frac{u}{x} \Bigr)^{-v} 
\biggl( G_{\! a} (u) - 1 - \frac{1}{u} \biggr) \frac{dudv}{x} \\ = & \, 
\frac{1}{2\pi i} \! \int_{s-ik}^{s+ik} \! \int_0^{\infty/x} \! \biggl( G_{\! a} (xw) - 1 - \frac{1}{xw} \biggr) \frac{dwdv}{w^{v}} .
\end{split}
\end{equation*}
By Cauchy's integral theorem and the fact that the function $G_{\! a}(u)-1$ approaches zero rapidly as $u$ tends to infinity along any ray $u=xw$ in the wedge $|\Im (\log x)| \le \pi /4$, $w \in {\mathbb{R}}$, the integral above is equal to
\begin{equation*}
\begin{split}
&\frac{1}{2\pi i} \int_{s-ik}^{s+ik} \int_0^{\infty} w^{-v} \biggl( G_{\! a} (xw) - 1 - \frac{1}{xw} \biggr) dwdv \\ &=
\int_0^{\infty} \biggl( \frac{1}{2\pi i} \int_{s-ik}^{s+ik} w^{-v} dv \biggr)
\biggl( G_{\! a} (xw) - 1 - \frac{1}{xw} \biggr) dw 
\\ &= \frac{1}{\pi} \int_0^{\infty} \frac{w^{-s} \sin (k \log w)}{\log w} \biggl( G_{\! a} (xw) - 1 - \frac{1}{xw} \biggr)dw .
\end{split}
\end{equation*}
This expresses $I_{x,k} (s,a)$ as the transform of an operator and shows, from the Parseval-Plancherel identity (see \cite[p.~216, line 7]{ERZ}), that
\begin{equation}\label{eq:IIsa}
\frac{1}{2\pi i} \int_{1/2-i\infty}^{1/2+i\infty} \bigl| I_{x,k} (s,a) \bigr|^2 ds = 
\frac{1}{\pi^2} \int_0^{\infty} \biggl| \frac{\sin (k \log w)}{\log w} \biggr|^2 
\biggl| G_{\! a}(xw) - 1 - \frac{1}{xw} \biggr|^2 dw .
\end{equation}

Note that under the change of variable $w \to w^{-1}$, the form $dw$ becomes $-dw/w^2$, the factor $\sin (k \log w)/\log w$ is unchanged, and the function $G_{\! a}(xw) - 1 - (xw)^{-1}$ becomes
\[
G_{\! a} \Bigl( \frac{1}{\overline{x} w} \Bigr) -1 - \overline{x} w = 
\overline{x} w \biggl( \frac{1}{\overline{x} w} 
G_{\! a} \Bigl( \frac{1}{\overline{x} w} \Bigr) - \frac{1}{\overline{x} w} -1 \biggr)
= \overline{x} w \biggl( G_{\! a} ( \overline{x} w) - 1 - \frac{1}{\overline{x} w} \biggr),
\]
where $x^{-1} = \overline{x}$, from (\ref{eq:G}). Thus, the integral of the right-hand side of (\ref{eq:IIsa}) is equal to twice the integral from $1$ to $\infty$. The first step is deriving an upper bound of the integral given by (\ref{eq:IIsa}). 

\begin{lemma}\label{lem:upper}
Let $I_{x,k} (s,a)$ be defined as in (\ref{eq:IIsa(1)}) with $x= e^{-i \pi /4} e^{i\delta/2}$. Then, there exists a constant $K >0$ such that given $\varepsilon >0$ the inequality
\[
\frac{1}{2\pi i} \int_{1/2-i\infty}^{1/2+i\infty} \bigl| I_{x,k} (s,a) \bigr|^2 ds < \frac{Kk+\varepsilon k^2}{\delta^{1/2}}
\]
holds for all sufficiently small $\delta>0$ (with $k>0$ being arbitrary).
\end{lemma}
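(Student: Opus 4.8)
The plan is to derive the bound from the Parseval--Plancherel identity (\ref{eq:IIsa}) together with the observation recorded just before the lemma that the integral on its right-hand side equals twice the integral from $1$ to $\infty$. Since $x=e^{-i\pi/4}e^{i\delta/2}$ one has $x^2w^2=w^2e^{i(\delta-\pi/2)}$, so $\Re(x^2w^2)=w^2\sin\delta$ and $|xw|=w$, and it suffices to estimate
\[
\frac{2}{\pi^2}\int_1^\infty\biggl|\frac{\sin(k\log w)}{\log w}\biggr|^2\biggl|G_{\! a}(xw)-1-\frac{1}{xw}\biggr|^2 dw .
\]
Writing $G_{\! a}(xw)-1-(xw)^{-1}=\bigl(G_{\! a}(xw)-1\bigr)-(xw)^{-1}$ and using $|\alpha+\beta|^2\le 2|\alpha|^2+2|\beta|^2$, the term $(xw)^{-1}$ is elementary: the substitution $w=e^{\tau}$ gives
\[
\int_1^\infty\biggl|\frac{\sin(k\log w)}{\log w}\biggr|^2\frac{dw}{w^2}=\int_0^\infty\frac{\sin^2(k\tau)}{\tau^2}\,e^{-\tau}\,d\tau<\int_0^\infty\frac{\sin^2(k\tau)}{\tau^2}\,d\tau=\frac{\pi k}{2},
\]
so this part is $O(k)$, hence $\le Kk/\delta^{1/2}$ for $\delta\le 1$.

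The heart of the proof is the bound for $\int_1^\infty\bigl|\tfrac{\sin(k\log w)}{\log w}\bigr|^2\bigl|G_{\! a}(xw)-1\bigr|^2\,dw$. Here I would use
\[
G_{\! a}(xw)-1=\sum_{n\in\mathbb Z}e^{-\pi x^2w^2(n+a)^2}+\sum_{0\ne n\in\mathbb Z}e^{-\pi x^2w^2n^2+2\pi ina},
\]
open the modulus square, integrate term by term in $w$, and split the double sum into its diagonal part (equal frequencies, so the $w$-integrand is $e^{-2\pi w^2 c\sin\delta}$ with $c=(n+a)^2$ or $c=n^2$) and its off-diagonal part (the $w$-integrand then carries an oscillatory factor $e^{-i\pi w^2\cos\delta\,(c'-c)}$). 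For the diagonal part one uses $\bigl|\tfrac{\sin(k\log w)}{\log w}\bigr|\le k$ on a short interval about $w=1$ and $\bigl|\tfrac{\sin(k\log w)}{\log w}\bigr|\le(\log w)^{-1}$ away from it, together with $e^{-2\pi w^2 c\sin\delta}\le 1$ and the Gaussian decay: the ``principal'' frequencies $|n|\lesssim(\sin\delta)^{-1/2}$ then produce one term of size $O\bigl(k(\sin\delta)^{-1/2}\bigr)=O(k/\delta^{1/2})$ (the elementary inequality $k^2(1-e^{-1/k})\le k$ is what keeps it $\le Kk/\delta^{1/2}$ for every $k>0$), while the high-frequency part $|n|\gtrsim(\sin\delta)^{-1/2}$ must be kept grouped rather than summed term by term, so that the cancellation carried by the exponentials $e^{2\pi ina}$ (and by $e^{i\pi(n+a)^2}$ after rotating onto the critical ray) reduces its joint contribution to $O(k^2)=o(k^2/\delta^{1/2})$; the sparse near-resonances $w\approx\sqrt{p}$ where no cancellation occurs add only $O\bigl(\delta^{-1/2}(\log(1/\delta))^{-2}\bigr)$ in total.

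For the off-diagonal part each term is an oscillatory integral $\int_1^\infty\bigl|\tfrac{\sin(k\log w)}{\log w}\bigr|^2 e^{-\pi w^2(x^2c+\bar x^2c')}\,dw$ with $c\ne c'$; there is no stationary point on $[1,\infty)$ since the phase derivative $-2\pi w\cos\delta\,(c'-c)$ never vanishes, so integration by parts bounds it by $O\bigl(k^2/|c'-c|\bigr)$ times an exponentially small Gaussian factor, and together with the Gaussian cutoff of the frequency sum at $|n|,|m|\asymp(\sin\delta)^{-1/2}$ (and the extra cancellation supplied by the character-type weight $e^{2\pi i(n-m)a}$) this contributes at most $O\bigl(k^2(\log(1/\delta))^2\bigr)=o(k^2/\delta^{1/2})$, the double sum being handled exactly as in the proof of (\ref{eq:iq236}). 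Collecting the three pieces gives $\frac{1}{2\pi i}\int_{1/2-i\infty}^{1/2+i\infty}|I_{x,k}(s,a)|^2\,ds<(Kk+\varepsilon k^2)/\delta^{1/2}$ with a fixed $K>0$ and any prescribed $\varepsilon>0$, once $\delta$ is small enough depending on $\varepsilon$. The main obstacle, exactly as in the classical Hardy--Littlewood argument (\cite[Chapter 10.7]{Tit}, \cite[Chapter 11.2]{ERZ}) that this imitates, is this uniform-in-$k$ bookkeeping: one must isolate the single genuine $O(k/\delta^{1/2})$ term and verify that everything else --- the high-frequency part of $G_{\! a}(xw)-1$ and the entire off-diagonal sum --- is of strictly smaller order than $k^2/\delta^{1/2}$, which forces a careful choice of where to pass from the bound $k$ to the bound $(\log w)^{-1}$ and careful use of the cancellation coming from the oscillation and from the factors $e^{2\pi ina}$ attached to the periodic-zeta part of $G_{\! a}$.
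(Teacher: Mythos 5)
Your skeleton coincides with the paper's: start from the Parseval--Plancherel identity (\ref{eq:IIsa}) and the reflection $w\to w^{-1}$ reducing to $w\ge 1$, split off the elementary $(xw)^{-1}$ piece, pass from the bound $k$ to $(\log w)^{-1}$ at $w=e^{\pi/k}$, open the square of the two theta sums, and treat diagonal and off-diagonal terms separately, the off-diagonal ones by a nonstationary-phase/monotone-amplitude bound of size $\exp(-\pi(n^2+m^2)\sin\delta)/\bigl((n^2-m^2)\cos\delta\bigr)$ whose sum over $m<n$ is $O\bigl((\log(1/\delta))^2\bigr)=o(\delta^{-1/2})$. That part of your plan is sound, although the ``extra cancellation'' from $e^{2\pi i(n-m)a}$ is never needed (absolute values suffice after the oscillatory-integral bound), and (\ref{eq:iq236}) is the wrong reference: that inequality concerns $\sum n^{-1/2}m^{-1/2}/(\log n\log m|\log(n/m)|)$ in Lemma \ref{lem:C-S}, whereas here one needs $\sum_{m<n}e^{-\pi(n^2+m^2)\sin\delta}/(n^2-m^2)\ll\sum_{n}e^{-n^2\sin\delta}(\log n)/n$, which is the estimate the paper imports from Edwards.

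The genuine gap is in your diagonal estimate, i.e.\ precisely the step that produces the main term $Kk/\delta^{1/2}$. After squaring, the diagonal terms are positive: the factors $e^{2\pi ina}$ disappear because $|e^{2\pi ina}|^2=1$, so there is no cancellation to exploit and none is needed. Your claim that the high-frequency range $|n|\gtrsim(\sin\delta)^{-1/2}$ ``must be kept grouped'' so that cancellation from $e^{2\pi ina}$ and $e^{i\pi(n+a)^2}$ brings it down to $O(k^2)$, with ``near-resonances $w\approx\sqrt{p}$'' contributing $O\bigl(\delta^{-1/2}(\log(1/\delta))^{-2}\bigr)$, is both a misdescription of the situation and left entirely unproved; as written, this step does not constitute a proof. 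The correct (and far simpler) treatment, which is the paper's, keeps the Gaussian decay in $w$: $\sum_{n\ge 1}e^{-2\pi n^2w^2\sin\delta}\ll (w\sqrt{\sin\delta})^{-1}$, so the diagonal contribution is $\ll k^2\int_1^{e^{\pi/k}}\frac{dw}{w\sqrt{\sin\delta}}\ll k\,\delta^{-1/2}$, uniformly in $k$. This also repairs your bookkeeping: bounding each diagonal term by $1$ and counting $\asymp(\sin\delta)^{-1/2}$ frequencies produces the factor $k^2(e^{\pi/k}-1)$, not the $k^2(1-e^{-1/k})$ you quote (the latter belongs to the $|1/(xw)|^2$ piece, $\int_1^{e^{\pi/k}}w^{-2}dw$), and $k^2(e^{\pi/k}-1)$ is $O(k)$ only when $k$ stays away from $0$, whereas $\int_1^{e^{\pi/k}}dw/w=\pi/k$ gives the $k$-uniform bound the lemma asserts. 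With the diagonal handled this way, the rest of your argument does assemble into $(Kk+\varepsilon k^2)\delta^{-1/2}$.
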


\begin{proof}
From the inequality
\[
\biggl| \frac{\sin ky}{y} \biggr| \le
\begin{cases}
k & 0 \le \pi /k, \\
y^{-1} & y \ge \pi / k,
\end{cases}
\]
the integral of the left-hand side of (\ref{eq:IIsa}) is bounded above by
\begin{equation}\label{eq:IIsa(3)}
\frac{2k^2}{\pi^2} \!\! \int_1^{e^{\pi/k}} \biggl| G_{\! a} (xw) - 1 - \frac{1}{xw} \biggr|^2 dw 
+ \frac{2}{\pi^2} \! \int_{e^{\pi/k}}^{\infty} \biggl| G_{\! a} (xw) - 1 - \frac{1}{xw} \biggr|^2 \frac{dw}{(\log w)^{2}}.
\end{equation}
According to the inequality $|A+B|^2 \le 2|A|^2+2|B|^2$, where $A,B \in {\mathbb{C}}$, the first integral in (\ref{eq:IIsa(3)}) is at most
\[
\frac{4k^2}{\pi^2} \int_1^{e^{\pi/k}} \bigl| G_{\! a} (xw) - 1  \bigr|^2 dw + 
\frac{4k^2}{\pi^2} \int_1^{e^{\pi/k}} \frac{dw}{w^2}.
\]
Obviously, the second definite integral is $4k^2 \pi^{-2}(1-e^{-\pi /k})$. From $|A+B|^2 \le 2|A|^2+2|B|^2$ again, the first integral is bounded above by
\begin{equation*}
\begin{split}
\frac{8 k^2}{\pi^2} \!\! \int_1^{e^{\pi/k}} \Biggl| \sum_{n \in \mathbb{Z}} \exp \bigl( -\pi x^2 w ^2 (n+a)^2 \bigr) \Biggr|^2 dw 
+ \frac{8 k^2}{\pi^2} \!\! \int_1^{e^{\pi/k}} \Biggl|\sum_{0 \ne n \in \mathbb{Z}} \exp \bigl( -\pi x^2 w^2 n^2 + i2\pi na \bigr) \Biggr|^2 dw
\end{split}
\end{equation*}
Let $x=e^{i\pi/4} e^{-i \delta /2}$ so that $x^2 = \sin \delta + i \cos \delta$. Then, the integrand in the first and second integrals respectively become
\begin{equation}\label{eq:(4)}
\sum_{a_\flat, \, a_\sharp \,} \sum_{n,m =0}^\infty \exp \bigl( G_{n,m}^{a_\flat, a_\sharp} (w, \delta) \bigr), 
\qquad 
\sum_{a_\flat, \, a_\sharp \,} \sum_{n,m =1}^\infty \exp \bigl( H_{n,m}^{a_\flat, a_\sharp} (w, \delta) \bigr), 
\end{equation}
where the sum $\sum_{a_\flat, \, a_\sharp}$ taken over $(a_\flat, a_\sharp) =(a,a), (a,1-a), (1-a,a), (1-a, 1-a)$ and the functions $G_{n,m}^{a_\flat, a_\sharp} (w, \delta)$ and $G_{n,m}^{a_\flat, a_\sharp} (w, \delta) $ are defined by
\begin{equation*}
\begin{split}
&G_{n,m}^{a_\flat, a_\sharp} (w, \delta) := 
-\pi \bigl( (n+a_\flat)^2 + (m+a_\sharp)^2 \bigr) w^2 \sin \delta 
-i\pi \bigl( (n+a_\flat)^2 - (m+a_\sharp)^2 \bigr) w^2 \cos \delta , \\
&H_{n,m}^{a_\flat, a_\sharp} (w, \delta) := 
-\pi \bigl( n^2 + m^2 \bigr) w^2 \sin \delta -i\pi \bigl( n^2 - m^2 \bigr) w^2 \cos \delta 
+ i2\pi na_\flat - i2\pi ma_\sharp, 
\end{split}
\end{equation*}
respectively. We divide each double sum $\sum_{n,m =0}^\infty$ and $\sum_{n,m =1}^\infty$ into three sums, one in which $n=m$, one in which $n>m$ and one in which $n<m$ because both double sums in (\ref{eq:(4)}) converge absolutely. If $n=m$, we have
\begin{equation}\label{eq:C1a}
\begin{split}
&\sum_{a_\flat, \, a_\sharp \,}  \Biggl( \sum_{n=0}^\infty \exp \bigl( G_{n,n}^{a_\flat, a_\sharp} (w, \delta) \bigr) 
+ \sum_{n=1}^\infty \exp \bigl( H_{n,n}^{a_\flat, a_\sharp} (w, \delta) \bigr) \Biggr)
\\ & \le C_5 (a) \sum_{n=1}^\infty \exp \Bigl( -2\pi n^2 w^2 \sin \delta \Bigr)
\end{split}
\end{equation}
for some positive constant $C_5 (a)$. From the inequality
\[
\frac{8 k^2}{\pi^2} \int_1^{e^{\pi/k}} \sum_{n=1}^\infty \exp \Bigl( -2\pi n^2 w^2 \sin \delta \Bigr) dw \ll 
\frac{k^2}{\pi^2} \int_1^{e^{\pi/k}} \!\!\! \frac{dw}{w (2\sin \delta)^{1/2}}
\ll k \delta^{-1/2}, 
\]
which is proved in \cite[p.~231, line 7 from the bottom]{ERZ}, for some positive constant $C_6 (a)$, it holds that 
\begin{equation*}
\begin{split}
&\frac{8 k^2}{\pi^2} \int_1^{e^{\pi/k}} \sum_{a_\flat, \, a_\sharp \,}  \Biggl( \sum_{n=0}^\infty
\exp \bigl( G_{n,n}^{a_\flat, a_\sharp} (w, \delta) \bigr) +  \sum_{n=1}^\infty 
\exp \bigl( H_{n,n}^{a_\flat, a_\sharp} (w, \delta) \bigr) \Biggr)
dw \\ & \le C_6(a) k \delta^{-1/2}. 
\end{split}
\end{equation*}

Next, we will estimate the definite integral of the remaining terms $n \ne m$ of (\ref{eq:(4)}) from $1$ to $e^{\pi/k}$. The terms with $m > n$ are the complex conjugates of those with $m < n$, so it suffices to estimate the latter. Consider the integral 
\[
\frac{8 k^2}{\pi^2} \!\! \int_1^{e^{\pi/k}} \!\! \!\!\!\! \!\! \exp
\bigl( -\pi \bigl( (n+a)^2 + (m+a)^2 \bigr) w^2 \sin \delta -i\pi \bigl( (n+a)^2 - (m+a)^2 \bigr) w^2 \cos \delta \bigr) dw
\]
when $n>m$. The real part of the integral is
\[
\frac{8 k^2}{\pi^2} \int_1^{e^{\pi/k}} \!\! f(w,a) \cos V(w,a) dw, 
\]
\begin{equation*}
\begin{split}
\mbox{where} \qquad \qquad \quad
\begin{cases}
f(w,a) := \exp \bigl( -\pi \bigl( (n+a)^2 + (m+a)^2 \bigr) w^2 \sin \delta \bigr), \\
V(w,a) := \pi \bigl( (n+a)^2 - (m+a)^2 \bigr) w^2 \cos \delta . \qquad \qquad \quad
\end{cases}
\end{split}
\end{equation*}
Because $\cos \delta$ is positive for a small $\delta >0$, $V(w,a)$ is a monotone increasing function with respect to $w$, and this  integral can be rewritten in terms of the variable $V$ as
\[
\frac{8 k^2}{\pi^2} \int_{V(1)}^{V(e^{\pi/k})} \frac{f}{V'} \cos V dV,
\]
where $f$ and $V'$ are functions of $V$ by composition with the inverse function $V \to w$. By \cite[Lemma in p.~197]{ERZ} and the fact that $f$ is decreasing and $V'$ is increasing, the integral above is not more than
\[
\frac{8 k^2}{\pi^2} \frac{2f(1)}{V'(1)} = \frac{16 k^2}{\pi^2} 
\frac{\exp \bigl( -\pi \bigl( (n+a)^2 + (m+a)^2 \bigr) w^2 \sin \delta \bigr)}{2 \pi \bigl( (n+a)^2 - (m+a)^2 \bigr) \cos \delta}.
\]
A similar estimate can be applied to the imaginary part and to both the real and imaginary parts of the following integral 
\[
\frac{8 k^2}{\pi^2} \!\! \int_1^{e^{\pi/k}} \!\! \!\!\!\! \!\! \exp 
\bigl( -\pi \bigl( n^2 + m^2 \bigr) w^2 \sin \delta -i\pi \bigl( n^2 - m^2 \bigr) w^2 \cos \delta 
+ i2\pi na - i2\pi ma \bigr) dw .
\]
Hence, for some positive constant $C_7 (a)$ and $a_\flat, a_\sharp = a$ or $1-a$, we have
\begin{equation}\label{eq:C3a}
\begin{split}
&\frac{8 k^2}{\pi^2} \biggl| \int_1^{e^{\pi/k}} \exp \bigl( G_{n,m}^{a_\flat, a_\sharp} (w, \delta) \bigr) dw \biggr|, \qquad
\frac{8 k^2}{\pi^2} \biggl| \int_1^{e^{\pi/k}} \exp \bigl( H_{n,m}^{a_\flat, a_\sharp} (w, \delta) \bigr) dw \biggr|
\\ & \le C_7(a) k^2 \frac{\exp \bigl( -\pi (n^2 + m^2) \sin \delta \bigr)}{(n^2 - m^2) \cos \delta}.
\end{split}
\end{equation}
For any $\varepsilon >0$ and for all sufficiently small $\delta >0$, we have
\[
\sum_{n=1}^\infty \sum_{m=0}^{n-1}  \frac{k^2 \exp \bigl( -\pi (n^2 + m^2) \sin \delta \bigr)}{(n^2 - m^2)}
\ll k^2 \sum_{n=1}^\infty \exp (-n^2 \sin \delta) \frac{\log n}{n} 
\le \varepsilon k^2 \delta^{-1/2}. 
\]
from \cite[p.~233, line 1]{ERZ}. Therefore, it holds that
\[
\frac{8 k^2}{\pi^2} \int_1^{e^{\pi/k}} \sum_{a_\flat, \, a_\sharp \,} \sum_{n=1}^\infty \Biggl(
\sum_{m=0}^{n-1} \exp \bigl( G_{n,m}^{a_\flat, a_\sharp} (w, \delta) \bigr) + 
\sum_{m=1}^{n-1} \exp \bigl( H_{n,m}^{a_\flat, a_\sharp} (w, \delta) \bigr) \Biggr) dw \le \varepsilon k^2 \delta^{-1/2}. 
\]

Similar methods prove that the same estimation applies to the second integral in (\ref{eq:IIsa(3)}). We can easily see that
\[
\frac{2}{\pi^2} \int_{e^{\pi/k}}^{\infty} (\log w)^{-2} \biggl| \frac{1}{xw} \biggr|^2 dw \le 
\frac{2}{\pi^2} (\log e^{\pi/k} )^{-2} \int_{e^{\pi/k}}^{\infty} \frac{dw}{w^2} \le \varepsilon k^2 \delta^{-1/2}.
\]
It is shown in \cite[p.~233, line 15]{ERZ} that
\[
\int_{e^{\pi/k}}^{\infty} (\log w)^{-2} \sum_{n=1}^\infty \exp \Bigl( -2\pi n^2 w^2 \sin \delta \Bigr) \ll k \delta^{-1/2}.
\]
Hence, from (\ref{eq:C1a}), it holds that 
\[
\int_{e^{\pi/k}}^{\infty} (\log w)^{-2} \!\! \sum_{a_\flat, \, a_\sharp \,}  \Biggl( \sum_{n=0}^\infty
\exp \bigl( G_{n,n}^{a_\flat, a_\sharp} (w, \delta) \bigr) + 
\sum_{n=1}^\infty \exp \bigl( H_{n,n}^{a_\flat, a_\sharp} (w, \delta) \bigr) \Biggr) dw \ll k \delta^{-1/2}.
\]
Moreover, it is shown in \cite[p.~233, line 12 from the bottom]{ERZ} that
\begin{equation*}
\begin{split}
&\int_{e^{\pi/k}}^{\infty}  \sum_{n=1}^\infty \sum_{m<n} 
\exp \Bigl( -\pi (n^2+m^2) w^2 \sin \delta -i\pi (n^2-m^2) w^2 \cos \delta \Bigr) \frac{dw}{(\log w)^2} \\
&\ll \sum_{n=1}^\infty \sum_{m<n}  \frac{\exp \bigl( -\pi (n^2 + m^2) \sin \delta \bigr)}{(\log e^{\pi/k})^2(n^2 - m^2)}
\le \varepsilon k^2 \delta^{-1/2}.
\end{split}
\end{equation*}
Thus, by the inequality above and modifying the proof of (\ref{eq:C3a}), we have
\[
\sum_{a_\flat, \, a_\sharp \,} \sum_{n=1}^\infty \int_{e^{\pi/k}}^{\infty} \Biggl( \sum_{m=0}^{n-1} 
\exp \bigl( G_{n,m}^{a_\flat, a_\sharp} (w, \delta) \bigr) + \sum_{m=1}^{n-1} 
\exp \bigl( H_{n,m}^{a_\flat, a_\sharp} (w, \delta) \bigr) \Biggr)
\frac{dw}{(\log w)^2} \le \varepsilon k^2 \delta^{-1/2}.
\]
The estimate of the integral (\ref{eq:IIsa}) is unchanged under $e^{i\pi/4} e^{-i \delta /2} \to e^{-i\pi/4} e^{i \delta /2}$ by the complex conjugate of $I_{x,k}(s,a)$. Hence, Lemma \ref{lem:upper} is obtained. 
\end{proof}

Next, we estimate the integral
\[
J_{k} (t,a) := \frac{1}{2\pi} \int_{t-k}^{t+k} \biggl|
\pi^{-1/4-iu/4} \Gamma \Bigl( \frac{1 + 2iu}{4} \Bigr) Q(1/2 + iu,a) e^{\pi u/4} e^{-u\delta/2} \biggr| du.
\]
When $x = e^{-i\pi/4} e^{i \delta /2}$, we have
\begin{equation*}
\begin{split}
&I_{x,k} (1/2+it,a) = \frac{1}{2\pi} \int_{t-k}^{t+k} 
\pi^{-1/4-iu/4} \Gamma \Bigl( \frac{1+2iu}{4} \Bigr) Q(1/2+iu,a) x^{-1/2} x^{iu} du \\
&=  \frac{x^{-1/2}}{2\pi} \int_{t-k}^{t+k} 
\pi^{-1/4-iu/4} \Gamma \Bigl( \frac{1+2iu}{4} \Bigr) Q(1/2+iu,a) e^{\pi u/4} e^{-u\delta/2} du .
\end{split}
\end{equation*}
For simplicity, we put
\begin{equation*}\label{eq:defIJ}
I_a (t) := I_{x,k} (1/2+it,a), \qquad J_a(t) := J_{k} (t,a).
\end{equation*}
Then, we have $J_a (t) \ge |I_a(t)|$ for all $t \in {\mathbb{R}}$ and $J_a (t) =|I_a (t)|$ whenever the interval of integration of $I_a (t)$ contains no roots of $Q(s,a)=0$ on the line $\Re(s) =1/2$. The basic idea of the proof is to show that in a suitable sense $J_a (t)$ is much larger than $|I_a (t)|$ on average. Thus, estimates of $J_a (t)$ from below are required. Stirling's formula yields 
\begin{equation*}
\begin{split}
\Gamma \Bigl( \frac{1+2iu}{4} \Bigr) \bigl| Q(1/2+iu,a) \bigr| e^{-\pi u/4} &\gg 
u^{3/4} e^{-u \pi /4} u^{-1} \bigl| Q(1/2+iu,a) \bigr| e^{\pi u/4} \\ & \gg u^{-1/4} \bigl| Q(1/2+iu,a) \bigr| 
\end{split}
\end{equation*}
when $u$ is sufficiently large. Thus, we have the following:
\begin{lemma}\label{lem:Jest}
For sufficiently large $t>0$, it holds that
\[
J_a (t) \gg (t+k)^{-1/4} e^{-(t+k)\delta/2} \int_{t-k}^{t+k} \bigl| Q(1/2+iu,a) \bigr| du.
\]
\end{lemma}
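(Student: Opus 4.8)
The plan is to obtain the bound by a direct unwinding of the definition of $J_a(t)$, since the analytic content has already been isolated in the Stirling estimate recorded just above the statement. First I would take the modulus inside the integral defining $J_a(t)$ and separate the factors: for real $u$ one has $|\pi^{-1/4-iu/4}| = \pi^{-1/4}$, while $e^{\pi u/4}$ and $e^{-u\delta/2}$ are positive reals, so
\[
J_a(t) = \frac{\pi^{-1/4}}{2\pi}\int_{t-k}^{t+k} \Bigl|\Gamma\Bigl(\tfrac{1+2iu}{4}\Bigr)\Bigr|\, e^{\pi u/4}\, e^{-u\delta/2}\, \bigl|Q(1/2+iu,a)\bigr|\,du .
\]
Here it is essential to read $e^{\pi u/4}$ and $e^{-u\delta/2}$ as genuine exponential factors (with $u$ real), not as quantities of unit modulus.

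Second, I would invoke Stirling's formula for the Gamma function on the vertical line, in the form $|\Gamma(1/4+iu/2)| \sim \sqrt{2\pi}\,(u/2)^{-1/4}e^{-\pi u/4}$ as $u\to\infty$; this is precisely the estimate $|\Gamma((1+2iu)/4)|\,e^{\pi u/4}\gg u^{-1/4}$ displayed before the lemma. Substituting it gives, for every $t$ with $t-k$ sufficiently large,
\[
J_a(t) \gg \int_{t-k}^{t+k} u^{-1/4}\, e^{-u\delta/2}\, \bigl|Q(1/2+iu,a)\bigr|\,du .
\]

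Finally, I would note that on the interval $u\in[t-k,t+k]$ the weight $u^{-1/4}e^{-u\delta/2}$ is a decreasing function of $u$ (since $\delta>0$), hence is bounded below by its value at the right endpoint, $u^{-1/4}e^{-u\delta/2}\ge (t+k)^{-1/4}e^{-(t+k)\delta/2}$; pulling this constant out of the integral yields the asserted inequality. There is no serious obstacle: the only point deserving care is uniformity in $t$, namely that the phrase ``$t$ sufficiently large'' must be understood so that the whole segment $[t-k,t+k]$ lies in the range where the Stirling asymptotic is effective — in the intended application $k$ is small compared with $t$, so this is automatic.
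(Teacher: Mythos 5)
Your argument is correct and follows the paper's own route: the paper likewise deduces the lemma from Stirling's formula on the line $\Re(s)=1/2$, giving $|\Gamma((1+2iu)/4)|e^{\pi u/4}\gg u^{-1/4}$, and then bounds the remaining weight $u^{-1/4}e^{-u\delta/2}$ below on $[t-k,t+k]$ by its value at $u=t+k$. Nothing further is needed.
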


Now, we are in a position to prove the main theorem. Note that the proof below is based on the argument in \cite[Chapter 11.2]{ERZ} (see also \cite[Chapter 10.7]{Tit}). When $a=1/4$ or $1/2$, it holds that
\[
Q(s,1/2) = (2^s + 2^{1-s} -2) \zeta (s), \qquad 
2Q(s,1/4) = (2^{2s} - 2^s + 2^{2-2s} - 2^{1-s}) \zeta (s) 
\]
by (\ref{eq:qq}) (see also \cite[Section 2.2]{NRCQZ}). Therefore, we suppose $0 <a < 1/4$ or $1/4 < a < 1/2$ which implies that $\cos (2\pi a) \ne 0$ (see Lemma \ref{lem:Inlow}). 

\begin{proof}[Proof of Theorem \ref{th:main}]
Let $\nu$ be the number of zeros of $Q(1/2+it,a)$ in the interval $\{0 \le t \le B+k \}$. And let the line $\Re (s) =1/2$ be divided into intervals of length $k$ and for each of the $\nu$ zeros strike out the interval which contains it and the intervals which adjoin this one. Let $S$ be the subset of $\{ A \le t \le B\}$ consisting of points which do not lie in the stricken intervals. Then, the total length of the intervals of $S$ is not less than $B-A-3\nu k$ because a length of at most $3k$ was stricken for each zero. Note that $|I_a (t)| = J_a(t)$ for all $t \in S$. Put $I := \int_S | I_a (t) | dt$. Then, by Lemma \ref{lem:Jest} and the fact that there are no zeros between $t-k$ and $t+k$, we have
\[
I = \int_S J_a (t) \, dt \gg
\int_S (B+k)^{-1/4} e^{-(B+k)\delta/2} \int_{t-k}^{t+k} \bigl| Q(1/2+iu,a) \bigr| du dt .
\]
From Lemmas \ref{lem:Inlow} and \ref{lem:C-S}, we have
\begin{equation*}
\begin{split}
I & \gg
(B+k)^{-1/4} e^{-(B+k)\delta/2} \int_S  \Biggl( 2k \cos |(2\pi a)| - C_1 (a) - \frac{C_2 (a) k^2}{(t-k)^{1/2}} \\ &
\quad - \Biggl| \sum_{a_*=a, 1-a} \sum_{2 \le n < t/a_\star} 
\biggl( \frac{\sin (k \log (n+a_*))}{(n+a_*)^{1/2+it}\log(n+a_*)} + 
\frac{e^{2\pi i a_* n} \sin (k \log n)}{n^{1/2+it}\log n} \biggr) \Biggr| \Biggr) dt \\ & \gg
(B+k)^{-1/4} e^{-(B+k)\delta/2} \Bigl( 2k (B-A-3\nu k) |\cos (2\pi a)| - C_7 (a) B - C_8(a) k^2 B^{1/2} \Bigr) 
\end{split}
\end{equation*}
for some positive constants $C_7 (a)$ and $C_8 (a)$. Let $(B+k)\delta =2$, which can be regarded as a choice of $B$ given $\delta$ and $k$, and let $B-A= \delta^{-1}$ which can be regarded as a choice of $A$. Then, the estimation above becomes
\begin{equation*}
\begin{split}
I  &\gg 
\delta^{1/4} \Bigl( 2k (\delta^{-1}-3\nu k) |\cos (2\pi a)| - 2 C_7 (a) \delta^{-1} - \sqrt{2} C_8(a) k^2 \delta^{-1/2} \Bigr)
\\ &\gg K_1 k \delta^{-3/4} - K_2 k^2 \delta^{1/4} \nu -K_3 \delta^{-3/4} - K_4 k^2 \delta^{-1/4},
\end{split}
\end{equation*}
where $K_1$, $K_2$, $K_3$ and $K_4$ are positive constants (depending on $a$). In contrast, from Lemma \ref{lem:upper}, we have
\begin{equation*}
\begin{split}
I & \le \int_A^B \bigl| I_a (t) \bigr| dt \le 
\biggl( \int_A^B \! 1^2 dt \biggr)^{\!\! 1/2}  \biggl( \int_A^B \bigl| I_a (t) \bigr|^2 dt \biggr)^{\!\! 1/2} \\
& \le B^{1/2} \biggl| \int_{1/2-i\infty}^{1/2+i\infty} \bigl| I_{x,k} (s,a) \bigr|^2 ds \biggr|^{1/2} 
= K_5\frac{(Kk+\varepsilon k^2)^{1/2}}{\delta^{3/4}}. 
\end{split}
\end{equation*}
Therefore, it holds that
\[
K_1 k \delta^{-3/4} - K_2 k^2 \delta^{1/4} \nu - K_3 \delta^{-3/4} - K_4 k^2 \delta^{-1/4}
\le K_5 \delta^{-3/4} (Kk+\varepsilon k^2)^{1/2},
\]
which is equivalent to
\[
\nu \ge \frac{K_1}{K_2} k^{-1} \delta^{-1} - \frac{K_3}{K_2} k^{-2} \delta^{-1} - \frac{K_4}{K_2} \delta^{-1/2}
- \frac{K_5}{K_2} k^{-1} \delta^{-1} \Bigl( \frac{K}{k} + \varepsilon \Bigr)^{\! 1/2} .
\]
We can make the coefficient of $k^{-1} \delta^{-1}$ on the right-hand side positive by choosing $\varepsilon >0$ and $k^{-1}>0$ to be sufficiently small. Hence, with this fixed $k>0$, it has been shown that for all sufficiently small $\delta>0$, the number of roots on the line segment from $1/2$ to $1/2+i2\delta^{-1}$ is greater than $K_6 \delta^{-1} - K_7 \delta^{-1/2}$ with $K_6>0$. 
\end{proof}

\subsection*{Acknowledgments}
The author would like to thank Professors Kohji Matsumoto and Masatoshi Suzuki for their useful advice. 
The author would like to also thank the referee for a careful reading of the manuscript and valuable comments and remarks.
The author was partially supported by JSPS, grant no.~16K05077. 


\end{document}